\numberwithin{theorem}{section}
\newcommand{\TheTitle}{Simplified energy landscape for modularity using total variation}
\newcommand{\TheAuthors}{Z. Boyd, E. Bae, X.-C. Tai, and A. L. Bertozzi}
\newcommand{\TheShortTitle}{Simplified landscape for modularity using TV}
\headers{\TheShortTitle}{\TheAuthors} 
\title{{\TheTitle}\thanks{Submitted to the editors DATE.}}
\author{\TheAuthors}
\author{
	Zachary M. Boyd\footnote{D\MakeLowercase{epartment} \MakeLowercase{of} M\MakeLowercase{athematics}, UCLA, L\MakeLowercase{os} A\MakeLowercase{ngeles}, CA (\MakeLowercase{\email{zach.boyd@math.ucla.edu})}.}
	\and
	Egil Bae\footnote{N\MakeLowercase{orwegian} D\MakeLowercase{efense} R\MakeLowercase{esearch} E\MakeLowercase{stablishment} (FFI), K\MakeLowercase{jeller}, N\MakeLowercase{orway} (\MakeLowercase{\email{Egil.Bae@ffi.no})}.}
	\and
	Xue-Cheng Tai\footnote{D\MakeLowercase{epartment} \MakeLowercase{of} M\MakeLowercase{athematics}, U\MakeLowercase{niversity} \MakeLowercase{of} B\MakeLowercase{ergen}} \footnote{D\MakeLowercase{epartment} \MakeLowercase{of} M\MakeLowercase{athematics}, H\MakeLowercase{ong} K\MakeLowercase{ong} B\MakeLowercase{aptist} U\MakeLowercase{niversity}, K\MakeLowercase{owloon} T\MakeLowercase{ong} K\MakeLowercase{owloon}, H\MakeLowercase{ong} K\MakeLowercase{ong} (\MakeLowercase{\email{tai@math.uib.no})}}
	\and
	Andrea Bertozzi\footnote{D\MakeLowercase{epartment} \MakeLowercase{of} M\MakeLowercase{athematics}, UCLA, L\MakeLowercase{os} A\MakeLowercase{ngeles}, CA (\MakeLowercase{\email{bertozzi@math.ucla.edu})}.}
}
\newcommand{\R}{\mathbb{R}}
\newcommand{\N}{\mathbb{N}}
\newcommand{\F}{\mathcal{F}}
\DeclareMathOperator{\threshold}{threshold}
\DeclareMathOperator{\vol}{vol}
\DeclareMathOperator{\const}{const}
\DeclareMathOperator{\nhat}{\hat{n}}
\DeclareMathOperator{\Cut}{Cut}
\DeclareMathOperator{\mean}{mean}
\DeclareMathOperator{\argmin}{argmin}
\DeclareMathOperator{\argmax}{argmax}
\DeclareMathOperator{\Div}{div}
\DeclareMathOperator{\grad}{\nabla}
\DeclareMathOperator{\diag}{diag}
\begin{document}

\maketitle

\begin{abstract}
	Networks capture pairwise interactions between entities and are frequently used in applications such as social networks, food networks, and protein interaction networks, to name a few. Communities, cohesive groups of nodes, often form in these applications, and identifying them gives insight into the overall organization of the network. One common quality function used to identify community structure is \emph{modularity}. In Hu et al.~[SIAM J.\ App.\ Math., 73(6), 2013], it was shown that modularity optimization is equivalent to minimizing a particular nonconvex total variation (TV) based functional over a discrete domain. They solve this problem---assuming the number of communities is known---using a Merriman, Bence, Osher (MBO) scheme.

	We show that modularity optimization is equivalent to minimizing a \emph{convex} TV-based functional over a discrete domain---again, assuming the number of communities is known. Furthermore, we show that modularity has no convex relaxation satisfying certain natural conditions. We therefore, find a manageable non-convex approximation using a Ginzburg Landau functional, which provably converges to the correct energy in the limit of a certain parameter. We then derive an MBO algorithm with fewer hand-tuned parameters than in Hu et al.\ and which is 7 times faster at solving the associated diffusion equation due to the fact that the underlying discretization is unconditionally stable. Our numerical tests include a hyperspectral video whose associated graph has $2.9\times 10^7$ edges, which is roughly 37 times larger than was handled in the paper of Hu et al.
\end{abstract}

\begin{keywords}
	social networks, community detection, data clustering, graphs, modularity, MBO scheme
\end{keywords}

\begin{AMS}
	65K10, 49M20, 35Q56, 62H30, 91C20, 91D30, 94C15
\end{AMS}

\section{Introduction}

Community detection in complex networks is a difficult problem with applications in numerous disciplines, including social network analysis~\cite{multiplex}, molecular biology~\cite{hartwell_et_al}, politics~\cite{multiplex}, material science~\cite{granular}, and many more~\cite{newman_book}. There is a large and growing literature on the subject, with many competing definitions of community and associated algorithms \cite{fortunato_2010, porter2009survey, fortunato_hric}. In practice, community detection is used as a way to understand the coarse, or mesoscale, properties of networks. Further investigation into these communities sometimes leads to insights about the processes that formed the network or the dynamics of processes acting on the network.

In this paper, we focus on the task of partitioning the nodes in a complex network into disjoint communities, although many other variations, such as overlapping, fuzzy, and time-dependent communities are also used in the literature. The proper way to understand such communities in small networks has been fairly well-studied, and their role in larger networks is the subject of active research~\cite{think_locally}.

A great variety of definitions have been proposed to make the partitioning task precise~\cite{fortunato_hric}, including notions involving edge-counting, random walk trapping, information theory, and---especially recently---generative models such as stochastic block models (SBMs). In this paper, we focus on modularity optimization~\cite{newman_girvan_2004}, which is the most well-studied of existing methods. To define it, we need the following terminology, which is used throughout the paper.
\begin{definition}
	Let $G$ be a non-negatively weighted, undirected, sparse graph with $N$ nodes, weight matrix $W = (w_{ij})$, degree vector $k$ satisfying $k_i = \sum_j w_{ij},$ and $2m = \sum_i k_i$.
	\label{def:G}
\end{definition}
Modularity-optimizing algorithms seek a partition $A_1,\ldots,A_{\nhat}$ of the nodes of $G$ which maximizes
\[ Q = \frac{1}{2m}\sum_{\ell = 1}^{\nhat} \sum_{ij \in A_{\ell}} w_{ij} - \frac{k_ik_j}{2m}. \]
Intuitively, we are to understand $w_{ij}$ as the observed edge weight and $\frac{k_i k_j}{2m}$ as the expected weight if the edges had been placed at random. Thus, there is an incentive to group those nodes which have an unusually strong connection under the null model.

The results of modularity optimization must be interpreted carefully. For example, the modularity functional, $Q$, will find communities in a random graph~\cite{guimere_pardo_nunes_amaral_2004}. In addition, many dissimilar partitions may yield near-optimal modularity values~\cite{clauset_2010}. This is to be expected, since the network partitioning problem is very well posed. Real networks are generated by complicated processes with many factors, and thus there are often multiple ways to partition a network that reflect legitimate divisions among the objects being studied~\cite{clauset_2016}. One way to leverage this diversity of high-modularity partitions in practice, as well as prevent the discovery of communities in random graphs, relies on consensus clustering~\cite{zhang_moore_2016}. Another approach is simply to sample many high-modularity partitions, expecting that multiple intuitively-meaningful partitions may be found. Such effects have been observed, for instance, in the Zachary Karate Club network, which has both a community structure and leader-follower structure~\cite{clauset_2016}.


Modularity also has preferred scale for communities~\cite{fortunato_barthelemy_2007,lancichinetti_fortunato_2011}. For this reason, one typically includes a resolution parameter $\gamma > 0$~\cite{reichardt_bornholdt_2006,arenas_resolution_2008}, yielding
\[ Q = \frac{1}{2m}\sum_{\ell = 1}^{\nhat} \sum_{ij \in A_{\ell}} w_{ij} - \gamma\frac{k_ik_j}{2m} \]
When $\gamma$ is nearly zero, the incentive is to place many nodes in the same community, so that the edge weight is included in the sum. When $\gamma$ is large, few nodes are placed in each community, to avoid including the large penalty term $\gamma \frac{k_ik_j}{2m}$.

A number of heuristics have been proposed to optimize modularity~\cite{fortunato_2010,fortunato_hric}, with prominent approaches including spectral~\cite{newman_spectral,newman_spectral_2}, simulated annealing~\cite{guimere_pardo_nunes_amaral_2004}, and greedy or Louvain algorithms~\cite{blondel_2008}. It can also be interpreted in terms of force-directed layout and optimized using visualization techniques~\cite{noack}. The modularity optimization problem is NP-hard~\cite{bandes_et_al_2008}, so it is not expected that a single heuristic will suffice for all situations. 

In 2013, Hu, Laurent, Porter, and Bertozzi~\cite{hui} discovered a connection between the modularity optimization problem in network science and total variation (TV) minimization from image processing. As an application, Hu et al.\ developed Modularity MBO, a TV-oriented optimization algorithm that effectively optimizes modularity. The present work strengthens both theoretical and algorithmic connections from~\cite{hui}. Specifically, we make the following contributions:

We start with derivations of four formulations of modularity, two in terms of TV and two in terms of graph cuts, which inspire the subsequent analysis. In addition to being intuitively simple, these formulas place all of the nonconvexity of the problem into a discrete constraint---the functionals themselves are convex. We prove a theorem showing that convex relaxation of modularity is not possible under certain conditions. While many practitioners have observed that modularity optimization seems highly nonconvex, ours is the first result of which we are aware showing this in a rigorous way. We then provide an alternative relaxation, using the Ginzburg-Landau functional, that smooths the discrete constraint so that it becomes manageable. We end the theory section by showing that solutions of our relaxed problem converge to maximizers of modularity in an appropriate sense.

Based on these ideas, and following~\cite{hui}, we develop an MBO-type scheme, Balanced TV, which quickly and accurately optimizes modularity in several examples. This algorithm seems especially well-suited to similarity networks from machine learning, where prior knowledge of the number of communities is available and the number of such communities tends to be modest. Using the convexity of our formulation of TV, we provide inner- and outer-loop timestep bounds to avoid hand-tuning parameters, as is necessary in~\cite{hui}. We also show how to discretize the partial differential equation (PDE) part of the MBO iteration in an unconditionally stable, efficient way. We test our algorithm on much larger datasets than are used in~\cite{hui}. Finally, we show that this approach can solve semi-supervised problems as well.

The rest of the paper is organized as follows: \Cref{sec:background} surveys the necessary background in both modularity optimization and TV minimization. \Cref{sec:theorem} develops the main theoretical results about the optimization problem itself. \Cref{sec:numerics} develops the theory and practical implementation of our algorithm, Balanced TV. \Cref{sec:results} gives numerical examples. \Cref{sec:conclusion} concludes. There are also appendices containing additional background and deferred proofs.

\section{Total Variation Optimization: Continuum and Discrete}
\label{sec:background}

While modularity optimization is normally understood as a combinatorial problem, TV was historically seen as a continuum object, with applications in partial differential equations, physics simulation, and image processing.\footnote{See~\cite{chambolle2010introduction} for a more complete treatment.} Given a smooth function $f$ from some domain $U\subset\R^n$ to $\R$, we define the TV of $f$ as
\[ |u|_{TV} = \int_U |\grad f|. \]
In the special case where $n=1,$ this is the total rise and fall of the function, hence the name. An important special case is when $n=2$ or $3$ and $f$ is the indicator function of a region $V\subset U$. In such a case, $|f|_{TV}$ is the perimeter or surface area of $V$.

Total variation minimization is an important heuristic in image processing, where e.g.\ a black and white image that is corrupted by noise can be viewed as a function $f: [0,1]^2 \to [0,1]$, where the value of $f$ varies from $0$ (black) to $1$ (white). A common task is to remove the noise and recover the original image. Since noise is manifest as large gradients in $f$, early approaches found $u$ as the solution to a minimization problem such as
\[
	\min_u \int_{[0,1]^2} ||\grad u||^2 + ||u-f||^2.
	\label{eqn:l2_regularized}
\]
The solution to such a problem is a smoothed image, which means that the noise is eliminated, but all edges are also erroneously eliminated. The correct approach~\cite{rof_1992}, is to modify the problem as follows
\[
	\min_u \int_{[0,1]^2} ||\grad u|| + ||u-f||^2.
\]
This small change allows the minimization procedure to preserve edges and yields much better results in many applications. The reason is that minimizers of total variation tend to be \emph{piecewise} smooth. Total variation minimization has other applications as well, such as compressed sensing~\cite{candes_romberg_tao_2006} and mean curvature flow~\cite{chambolle2009total,kolmogorov2007}.

Network community detection is in some ways analogous to image segmentation, in that both seek a partition into coherent subsets, and one of the main ideas behind the use of total variation in the network context is that it helps us arrive at the ``correct'' energy to optimize for, as in the image processing context. An important example is spectral approaches, such at those of~\cite{newman_spectral,newman_spectral_2}. In the case of only two communities, we let $u$ be a real-valued function on the nodes of the graph. A partition of the nodes into two communities can be encoded in such a function by letting $u=1$ on the nodes in one set and $u=-1$ on the others. The modularity can then be written as
\begin{align}
	\frac{1}{4m}	\sum_{ij} \left(w_{ij} - \gamma \frac{k_ik_j}{2m}\right)(1+u_iu_j)
	&= \frac{1}{4m}	\sum_{ij} \left(w_{ij} - \gamma \frac{k_ik_j}{2m}\right)
	+ \frac{1}{4m}	\sum_{ij} \left(w_{ij} - \gamma \frac{k_ik_j}{2m}\right)u_iu_j \\
	&= \const + \frac{1}{4m} \sum_{ij} \left(w_{ij} - \gamma \frac{k_ik_j}{2m}\right)u_iu_j \\
	&= \const + \frac{1}{2} u^T M u
	\label{exp:modularity_matrix}
\end{align}
where $M_{ij} = w_{ij} - \gamma \frac{k_i k_j}{2m}$ is the \emph{modularity matrix}.
Thus,~\cref{exp:modularity_matrix} is exactly equal to modularity when $u$ represents a partition but has an obvious extension to all $N$-vectors. An important idea in spectral approaches is to maximize~\cref{exp:modularity_matrix} or related energies over all real vectors and then employ some kind of thresholding on the values of the result to recover a binary partition. Recursive bipartitioning can be used to find partitions into more than two communities. A large number of variations on this idea exist and are widely used. Such approaches are analogous to ideas from~\cref{eqn:l2_regularized}, in that the solutions are expected to be smooth because of the quadratic term, which is indeed observed in practice, thus necessitating some kind of thresholding. In contrast, by using a non-quadratic measure of differences in the value of $u$ across edges, it is possible to promote sharp interfaces in the solutions.

We now very briefly give the definition of total variation on a graph, referring to~\cite{gilboa_osher_2008,van_gennip_2014} for a complete treatment, where it is shown that these choices are consistent with discrete notions of Riemannian metrics, inner products, divergences, and so forth. The nonlocal gradient of a function $f:G \to \R$ at node $i$ in the direction of the edge from $i$ to $j$ is
\[
	\nabla f(i,j) = f(j) - f(i).
\]
The \emph{graph total variation} is then given by the 1-norm of $\nabla f$ at node $i$
\begin{equation}
	|f|_{TV} = \frac{1}{2} \sum_{ij} w_{ij}|f(j) - f(i)|,
	\label{eqn:vector_tv}
\end{equation}
where $w_{ij}$ is the $i$, $j$ entry of the adjacency matrix (see~\cref{def:G}). We will actually use a slight generalization of~\cref{eqn:vector_tv} to the case where $f: \{1,\cdots,N\} \to \R^{\nhat}$ is vector-valued, in which case
\[
	|f|_{TV} = \sum_{\ell=1}^{\nhat} |f_\ell|_{TV}
\]
where $f_\ell$ is the $\ell$-th component of $f$. It is usually convenient in this case to identify $f$ with an $N\times\nhat$ matrix where $f_{i\ell} = f_{\ell}(i)$. Then we have
\[
	|f|_{TV} = \sum_{\ell=1}^{\nhat} \frac{1}{2} \sum_{ij=1}^N w_{ij}|f_{i\ell} - f_{j\ell}|.
	\label{eqn:matrix_tv}
\]

Graph total variation is connected to graph cuts, which correspond roughly to perimeter in Euclidean space.
\begin{definition}
	Let $S$ be a subset of the nodes of $G$. Then the graph cut associated to $S$ is given by
	\[
		\Cut(S,S^c) = \sum_{i\in S, j\in S^c} w_{ij}.
	\]
\end{definition}
Let $f:\{1,\cdots,N\}\to\R$ be the characteristic function of a set of nodes $S$. Then we can calculate
\begin{align}
	|f|_{TV} &= \frac{1}{2}\sum_{ij} w_{ij}|f(i) - f(j)|
	= \sum_{i\in S, j\in S^c} w_{ij}
	= \Cut(S,S^c).
\end{align}
TV minimization on a graph tends to produce piecewise-constant functions whose corresponding graph cut is small~\cite{egil_coarea}.


\section{Equivalence Theorem and its Consequences}
\label{sec:theorem}
In this section, we derive representations of modularity and explore some consequences. We will need definitions:
\begin{definition}
	A family of sets $S_1,\cdots,S_{\nhat}$ is a partition of a set $S$ if $S = \bigcup_{\ell=1}^{\nhat} S_{\ell}$ and $S_{\ell_1} \cap S_{\ell_2}$ is empty for each $\ell_1 \ne \ell_2$.
\end{definition}
\begin{definition}
	Let $\Pi(G)$ be the set of all partitions of the nodes of $G$.
	For each partition $A_1,\cdots,A_{\nhat}$ in $\Pi(G)$, there is an $N\times\nhat$ \emph{partition matrix} defined by,
	\[
		u_{i\ell} =
		\begin{cases}
			1 & i \in A_{\ell} \\
			0 & i \in A_{\ell}^c
		\end{cases}
	\]
	For a matrix $u$, we say $u\in \Pi(G)$ when $u$ is the partition matrix of some partition.
\end{definition}
\begin{definition}
	For any subset $S$ of the nodes of $G$, its \emph{volume} is given by $\vol S = \sum_{i\in S} k_i$.
\end{definition}

\subsection{Formulations of Modularity on Terms of TV and Graph Cuts}
We are now ready to give the different formulations of modularity that form the basis for our subsequent analysis.
\begin{proposition}[Equivalent forms of modularity]
	The following optimization problems all have the some solution set:
	\begin{align}
		&\text{Modularity:} &
		\label{eqn:modularity_opt}
		& \underset{\nhat\in\N, \{A_\ell\}_{\ell=1}^{\nhat} \in \Pi(G)}{\argmax}
		& & \sum_{\ell=1}^{\nhat} \sum_{ij\in A_{\ell}} w_{ij} - \gamma \frac{k_ik_j}{2m} \\
		&\text{Balanced cut (I):} &
		\label{eqn:balanced_cut}
		& \underset{\nhat\in\N, \{A_\ell\}_{\ell=1}^{\nhat} \in \Pi(G)}{\argmin}
		& & \sum_{\ell=1}^{\nhat} \left( \Cut\left( A_\ell,A_\ell^c \right) + \frac{\gamma}{2m} \left( \vol A_\ell \right)^2\right)\\
		&\text{Balanced cut (II):} &
		\label{eqn:balanced_cut_2}
		& \underset{\nhat\in\N, \{A_\ell\}_{\ell=1}^{\nhat} \in \Pi(G)}{\argmin}
		& & \sum_{\ell=1}^{\nhat} \left( \Cut\left( A_\ell,A_\ell^c \right) + \frac{\gamma}{2m} \left( \vol A_\ell - \frac{2m}{\nhat} \right)^2 \right) + \gamma \frac{2m}{\nhat} \\
		&\text{Balanced TV (I):} &
		& \quad\underset{\nhat\in\N, u\in \Pi(G)}{\argmin}
		\label{eqn:balanced_tv}
		& & |u|_{TV} + \frac{\gamma}{2m}\left|\left|k^Tu\right|\right|_2^2  \\
		&\text{Balanced TV (II):} &
		& \quad\underset{\nhat\in\N, u\in \Pi(G)}{\argmin}
		\label{eqn:balanced_tv_2}
		& & |u|_{TV} + \frac{\gamma}{2m}\left|\left|k^Tu - \frac{2m}{\nhat}\right|\right|_2^2 + \gamma \frac{2m}{\nhat}
	\end{align}
	\label{main_theorem}
\end{proposition}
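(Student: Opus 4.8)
\emph{Proof strategy.}
The plan is to show that the five objective functionals, restricted to a common feasible configuration, differ from one another only by an additive constant or by an overall change of sign; since the feasible sets are in obvious bijection, this already forces the five $\argmax$/$\argmin$ sets to coincide, and no optimization-theoretic argument (convexity, compactness, and so forth) is needed---only Boolean bookkeeping. Throughout, fix $\nhat$, a partition $A_1,\dots,A_{\nhat}\in\Pi(G)$, and its partition matrix $u$. The one identity that does all the work is that $u_{i\ell}\in\{0,1\}$ forces $u_{i\ell}^2=u_{i\ell}$, hence $|u_{i\ell}-u_{j\ell}| = u_{i\ell}+u_{j\ell}-2u_{i\ell}u_{j\ell}$; multiplying by $w_{ij}$, summing, and using $\sum_j w_{ij}=k_i$ together with the symmetry of $W$ gives, for each $\ell$,
\[
\Cut(A_\ell,A_\ell^c) \;=\; \tfrac12\sum_{ij} w_{ij}\,|u_{i\ell}-u_{j\ell}| \;=\; \vol A_\ell \;-\; \sum_{ij\in A_\ell} w_{ij},
\]
i.e.\ the degree sum over $A_\ell$ equals twice the internal edge weight plus the boundary weight. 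I would pair this with the trivial factorization $\sum_{ij\in A_\ell}\tfrac{k_ik_j}{2m}=\tfrac{1}{2m}\bigl(\sum_{i\in A_\ell}k_i\bigr)^2=\tfrac{1}{2m}(\vol A_\ell)^2=\tfrac1{2m}(k^Tu)_\ell^2$ and with the partition identity $\sum_\ell\vol A_\ell=\sum_i k_i=2m$.

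Chaining these: the modularity objective of~\eqref{eqn:modularity_opt} equals $\sum_\ell\bigl(\vol A_\ell-\Cut(A_\ell,A_\ell^c)-\tfrac{\gamma}{2m}(\vol A_\ell)^2\bigr)=2m-\sum_\ell\bigl(\Cut(A_\ell,A_\ell^c)+\tfrac{\gamma}{2m}(\vol A_\ell)^2\bigr)$, so maximizing~\eqref{eqn:modularity_opt} is the same as minimizing the Balanced cut (I) functional~\eqref{eqn:balanced_cut}. Since $|u|_{TV}=\sum_\ell\Cut(A_\ell,A_\ell^c)$ by~\eqref{eqn:matrix_tv} and the cut identity recorded in \Cref{sec:background}, and $\|k^Tu\|_2^2=\sum_\ell(\vol A_\ell)^2$ by the factorization above, the Balanced TV (I) functional~\eqref{eqn:balanced_tv} is exactly the Balanced cut (I) functional transported along the bijection between $\Pi(G)$ and partition matrices, so~\eqref{eqn:balanced_cut} and~\eqref{eqn:balanced_tv} have the same solution set. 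Finally, completing the square, $\sum_\ell(\vol A_\ell-\tfrac{2m}{\nhat})^2=\sum_\ell(\vol A_\ell)^2-\tfrac{4m}{\nhat}\sum_\ell\vol A_\ell+\nhat\tfrac{4m^2}{\nhat^2}=\sum_\ell(\vol A_\ell)^2-\tfrac{4m^2}{\nhat}$, so the correction term $+\gamma\tfrac{2m}{\nhat}$ in~\eqref{eqn:balanced_cut_2} is chosen precisely to cancel the resulting $-\tfrac{\gamma}{2m}\cdot\tfrac{4m^2}{\nhat}$, making the Balanced cut (II) functional \emph{pointwise equal} to the Balanced cut (I) functional; the identical expansion of $\|k^Tu-\tfrac{2m}{\nhat}\|_2^2$ shows~\eqref{eqn:balanced_tv_2} equals~\eqref{eqn:balanced_tv}.

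The only point that takes any care---and the nearest thing to an obstacle---is that $\nhat$ is itself an optimization variable, so I must check that each identity above holds \emph{uniformly in $\nhat$} and that the resulting objectives agree as functions on $\bigsqcup_{\nhat\in\N}\{\nhat\}\times\Pi(G)$. Concretely, padding a partition with empty blocks raises $\nhat$ but contributes $\Cut=0$ and $\vol=0$ to every sum and, after the simplifications above, leaves each of the five functionals at its unpadded value (for~\eqref{eqn:balanced_cut_2} and~\eqref{eqn:balanced_tv_2} this is because those functionals were just shown to equal~\eqref{eqn:balanced_cut} and~\eqref{eqn:balanced_tv}, which are visibly padding-independent). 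Hence the feasible domains match up under the partition/partition-matrix correspondence, and the common solution set claimed in the proposition follows.
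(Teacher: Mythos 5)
Your proof is correct and takes essentially the same route as the paper's: the same two nontrivial identities carry the argument, namely the expansion of the internal-edge sum (equivalently your cut identity $\Cut(A_\ell,A_\ell^c)=\vol A_\ell-\sum_{ij\in A_\ell}w_{ij}$) giving the modularity objective as $2m$ minus the Balanced cut (I) objective, and the completing-the-square computation using $\sum_\ell \vol A_\ell = 2m$ showing that Balanced cut (II) equals Balanced cut (I) pointwise, with the TV forms treated as a change of notation. Your additional observations (deriving the cut identity through the binary partition-matrix algebra, and checking invariance under padding with empty blocks so that the equivalence holds uniformly in $\nhat$) supply extra detail but do not constitute a different argument.
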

Each of the preceding forms has a different interpretation. The original formulation of modularity was based on comparison with a statistical model and views communities as regions that are more connected than they would be if edges were totally random. The cut formulations represent modularity as favoring sparsely interconnected regions with balanced volumes, and the TV formulation seeks a piecewise-constant partition function $u$ whose discontinuities have small perimeter, together with a balance-inducing quadratic penalty. The cut and TV forms come in pairs. The first form (labelled ``I'') is simpler to write but harder to interpret, while the second (labelled ``II'') has more terms, but the nature of the balance term is easier to understand, as it is minimized (for fixed $\nhat$) when each community has volume $2m/\nhat$. Furthermore, the third term of the forms labelled II reveals that the incentive to increase the number of communities $\nhat$ can be quantified in terms of an $O(\nhat^{-1})$ penalty term, which is not obvious from other formulations of modularity.

One can compare these equivalent formulations with~\cite{hui}, in which minimizing the functional
\begin{equation}
	|u|_{TV} - \gamma || u - \mean(u) ||_{\ell^2(G)}^2 = |u|_{TV} - \gamma \sum_{i\ell} k_i \left|u_{i\ell} - \frac{1}{2m} \sum_{i' = 1}^N k_i u_{i'\ell}\right|^2
	\label{eqn:hui_formulation}
\end{equation}
is shown to be equivalent to modularity optimization, subject to the same constraint as the other TV formulas presented here. Thus, in~\cite{hui}, there are two sources of nonconvexity, namely the balance term and the constraint, while in our formulation, the discrete constraint is the only source of nonconvexity.%
\footnote{To see rigorously that~\cref{eqn:hui_formulation} is nonconvex, consider the special case of two nodes connected by a single edge, $\gamma=1$ and $u=[\lambda\ 0;0\ 0].$ Then considering~\cref{eqn:hui_formulation} as a function of $\lambda$ immediately shows the nonconvexity. The nonconvexity is actually very general; computing the second derivative of the second term in~\cref{eqn:hui_formulation} with respect to any component of $u$ gives a negative value for any connected graph with more than one node. Since the TV term grows asymptotically linearly, it is eventually dominated by the quadratic growth of the second, concave term.}
It is also clearer from our formulation which features of a solution are incentivized by modularity optimization, namely, the two priorities of having a small graph cut and balanced class sizes are the only considerations. The relative weight of these considerations, as well as the number of communities, is governed by $\gamma$, via the second and third terms of~\cref{eqn:balanced_tv_2}. Overall, these theoretical simplifications make the nonconvexity of the problem easier to navigate.

We note that forms similar to~\crefrange{eqn:balanced_cut}{eqn:balanced_tv_2} have appeared in the literature before (see e.g.~\cite{reichardt_bornholdt_2006}), although the only previous work to consider any modularity formula in terms of total variation is~\cite{hui}. To the best of our knowledge, the composition of modularity into the three intuitively meaningful terms in the forms labelled II is also novel. We will see shortly that the total variation perspective on~\crefrange{eqn:balanced_cut}{eqn:balanced_tv_2}, combined with the convexity of the functionals in~\cref{eqn:balanced_tv} and \cref{eqn:balanced_tv_2} leads to a number of new developments.

%


\Crefrange{eqn:balanced_cut}{eqn:balanced_tv_2} provide a convenient way to incorporate metadata into the partitioning process. This can be done by simply incorporating a fidelity term and minimizing the functional
\begin{equation}
	|u|_{TV} + \frac{\gamma}{2m}||k^Tu||_2^2 + \lambda ||\chi*(u - f) ||_2^2
	\label{eqn:ssl}
\end{equation}
where $\lambda>0$ is a parameter, $f$ is a term containing the metadata labels, $*$ is the entry-wise matrix product, and $\chi$ is a matrix that is zero except in the entries where labels are known. Including metadata should always be done with care, of course, but the general utility of semisupervised learning is well-attested in image processing and machine learning applications. (See~\cref{ssl_table} for two numerical examples.)

\begin{proof}[Proof of \Cref{main_theorem}]
	Notice that the cut and TV formulations are really just a change of notation, so that there are two nontrivial equivalences, namely the equivalence of~\cref{eqn:modularity_opt} with~\cref{eqn:balanced_cut} and the equivalence of~\cref{eqn:balanced_cut} and~\cref{eqn:balanced_cut_2}. We first show the equivalence of~\cref{eqn:modularity_opt} with~\cref{eqn:balanced_cut}.
	Fix $\nhat,$ and consider an otherwise arbitrary partition $\{A_1,\dots,A_{\nhat}\}$ of $G$. Then we have
	\begin{align}
		Q &= \frac{1}{2m} \sum_{\ell =1}^{\nhat} \sum_{ij \in A_\ell } w_{ij} - \gamma\frac{k_ik_j}{2m} \\
		&= \frac{1}{2m} \sum_{\ell =1}^{\nhat}\left(\sum_{i\in A_\ell, j\in\{1,\ldots,N\}} w_{ij} - \sum_{i \in A_\ell, j \in A_\ell^c } w_{ij}\right) - \frac{\gamma}{2m} \sum_{\ell =1}^{\nhat} \sum_{ij \in A_\ell }\frac{k_ik_j}{2m} \\
		&= \frac{1}{2m} \sum_{ij=1}^N w_{ij} - \frac{1}{2m}\sum_{\ell =1}^{\nhat} \sum_{i \in A_\ell, j \in A_\ell^c } w_{ij} - \frac{\gamma}{2m} \sum_{\ell =1}^{\nhat} \sum_{ij \in A_\ell }\frac{k_ik_j}{2m} \\
		&= 1 - \frac{1}{2m} \sum_{\ell =1}^{\nhat}\sum_{i \in A_\ell, j \in A_\ell^c } w_{ij} - \frac{\gamma}{2m} \sum_{\ell =1}^{\nhat} \sum_{ij \in A_\ell }\frac{k_ik_j}{2m} \\
		&= 1 - \frac{1}{2m}\sum_{\ell =1}^{\nhat} \Cut(A_\ell ,A_\ell^c) - \frac{\gamma}{2m}\sum_{\ell=1}^{\nhat}\sum_{ij \in A_\ell} \frac{k_ik_j}{2m}. \\
		\intertext{Summing along the $j$ index first yields}
		&= 1 - \frac{1}{2m}\sum_{\ell=1}^{\nhat}\left( \Cut(A_\ell,A_\ell^c) + \frac{\gamma}{2m} \sum_{\ell=1}^{\nhat}\sum_{i \in A_\ell} k_i\vol A_\ell \right)\\
		&= 1 - \frac{1}{2m}\sum_{\ell=1}^{\nhat}\left( \Cut(A_\ell,A_\ell^c) + \frac{\gamma}{2m}(\vol A_\ell)^2\right)
	\end{align}
	Thus, the maxima of modularity coincide with the minima the functional from~\cref{eqn:balanced_cut}, as required.

	To see that~\cref{eqn:balanced_cut} and~\cref{eqn:balanced_cut_2} are equivalent, we calculate:
	\begin{align}
		& \sum_{\ell=1}^{\nhat} \left( \Cut\left( A_\ell,A_\ell^c \right) + \frac{\gamma}{2m} \left( \vol A_\ell - \frac{2m}{\nhat} \right)^2 \right) \\
		&= \sum_{\ell=1}^{\nhat} \left( \Cut\left( A_\ell,A_\ell^c \right)
		+ \frac{\gamma}{2m} \left(
		\left( \vol A_\ell \right)^2 - \frac{4m}{\nhat}\vol A_\ell  + \frac{4m^2}{\nhat^2}
		\right) \right)\\
		&= \sum_{\ell=1}^{\nhat}
		\left(
		\Cut\left( A_\ell,A_\ell^c \right)
		+ \frac{\gamma}{2m} \left(\vol A_\ell \right)^2
		\right)
		- \frac{\gamma}{2m}\frac{8m^2}{\nhat} + \frac{\gamma}{2m}\frac{4m^2}{\nhat}\\
		&= \sum_{\ell=1}^{\nhat}
		\left(
		\Cut\left( A_\ell,A_\ell^c \right)
		+ \frac{\gamma}{2m} \left(\vol A_\ell \right)^2
		\right)
		- \gamma \frac{2m}{\nhat}
	\end{align}
\end{proof}

\subsection{On convex relaxations}
The preceding equivalence theorem makes it very tempting to look for a convex relaxation of~\cref{eqn:balanced_tv}. Recall that, given two sets, $A\subset B$ where $A$ is discrete and a functional $\mathcal{F}:A\to \R$, a relaxation of $\mathcal{F}$ is any function $\bar{\mathcal{F}}:B\to \R$ such that $\mathcal{F} = \bar{\mathcal{F}}$ on $A$. A relaxation is called exact in the context of minimization if $\min_{x\in A}\mathcal{F} =\min_{x\in B} \bar{\mathcal{F}}$.\footnote{Analogous notions apply to maximization problems, but we are using~\cref{eqn:balanced_tv} rather than~\cref{eqn:modularity_opt} for the moment.} Finally, a relaxation is called convex if $\bar{\mathcal{F}}$ is convex.

Modularity~\cref{eqn:modularity_opt} and balanced TV~\cref{eqn:balanced_tv} are both defined only over a discrete domain, and we would like an extension, or relaxation, of these functions to a larger, continuum domain so that they are easier to work with numerically. Ideally, we could arrive at a convex relaxation and have access to the powerful tools of convex optimization. The formulation in~\cref{eqn:balanced_tv} indicates one way to proceed. Using~\cref{eqn:balanced_tv}, we already have a convex functional except for the domain, so one would hope that the obvious relaxation obtained by using formula~\cref{eqn:balanced_tv} on all of $\R^{N\times\nhat}$ would be useful. Unfortunately, the next theorem shows that this obvious relaxation is minimized by the constant matrix and is thus not likely to be useful. In fact, it shows that a large class of other convex relaxations will be uninformative. This will force us to look for nonconvex approaches in the next subsection. Before we state the theorem, we include three more definitions:
\begin{definition}
	The symmetric group on $\nhat$ symbols, $S_{\nhat}$, is the set of all permutations on $\{1,\cdots,\nhat\}$. Each element $\sigma\in S_{\nhat}$ acts on a matrix $u\in\R^{N\times\nhat}$ with columns $u_1,\cdots,u_{\nhat}$ by sending $u$ to another matrix, $\sigma(u)$ with columns $u_{\sigma(1)},\cdots,u_{\sigma(\nhat)}.$
	If $u\in\Pi(G)$, then $\sigma(u)$ is the same partition with the labels permuted.
\end{definition}
\begin{definition}
	A map $\F$ from some set of matrices to the real numbers is \emph{symmetric} if it is invariant under column permutations,
	i.e. $\F(u) = \F(\sigma(u))$ for all $\sigma$ and $u$.
	\label{def:sym}
\end{definition}
The balanced TV functional~\cref{eqn:balanced_tv} is symmetric, and most natural relaxations of it are symmetric.
\begin{definition}
	Given a set $S$ lying in a vector space $V$, the convex hull is the smallest convex set containing $S$.
\end{definition}
It can be shown that in a finite-dimensional vector space, the convex hull exists and is the intersection of all convex sets containing $S$. For example, if $S$ is given by three noncolinear points in the plane, the convex hull is a triangle.

We now state and prove our theorem on convex relaxations of modularity.
\begin{theorem}
	Let $\mathcal{F}$ be given by~\cref{eqn:balanced_tv} with domain $\Pi(G,\nhat) = \Pi(G)\cap\R^{N\times\nhat}$, and let $\tilde{\mathcal{F}}$ be any symmetric, convex extension of $\mathcal{F}$ to the convex hull of $\Pi(G,\nhat)$. Then $\tilde{\mathcal{F}}$ has a trivial, global minimizer $\tilde{u}$ that has all columns equal to each other, thus yielding no classification information.

	If the symmetry requirement is dropped, then $\tilde{u}$ need not be a global minimizer, but will have an objective value at least as good as any $u\in\Pi(G,\nhat)$.
\end{theorem}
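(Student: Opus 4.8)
The plan is to exhibit the trivial minimizer explicitly and obtain everything from a single symmetrization (Jensen) argument. Let $\tilde u\in\R^{N\times\nhat}$ be the matrix all of whose entries equal $1/\nhat$. Its columns are all equal, and it assigns every node the identical fractional membership $1/\nhat$ in each class, so it carries no classification information. First I would check that $\tilde u$ lies in the convex hull of $\Pi(G,\nhat)$: pick any one partition matrix $u_0\in\Pi(G,\nhat)$ and average over its orbit under column permutations. Since, by the paper's convention, $(\sigma(u_0))_{i\ell}=(u_0)_{i,\sigma(\ell)}$ and each value in $\{1,\dots,\nhat\}$ is hit by $\sigma(\ell)$ exactly $(\nhat-1)!$ times as $\sigma$ ranges over $S_{\nhat}$, the average $\tfrac{1}{\nhat!}\sum_{\sigma\in S_{\nhat}}\sigma(u_0)$ has $(i,\ell)$ entry $\tfrac1\nhat\sum_{\ell'}(u_0)_{i\ell'}=\tfrac1\nhat$ (each row of a partition matrix sums to $1$); hence this average equals $\tilde u$ and is a convex combination of elements of $\Pi(G,\nhat)$, so $\tilde u$ belongs to their convex hull.

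The key rigidity I would exploit next is that every point $v$ in the convex hull of $\Pi(G,\nhat)$ still has all row sums equal to $1$ (an affine condition, hence preserved under convex combinations). Repeating the computation above verbatim shows that the symmetrization $\bar v:=\tfrac{1}{\nhat!}\sum_{\sigma\in S_{\nhat}}\sigma(v)$ equals $\tilde u$ for \emph{every} such $v$: the symmetrization map collapses the entire polytope onto the single point $\tilde u$. For the first assertion, let $\tilde{\mathcal{F}}$ be a symmetric convex extension and let $v$ be arbitrary in the convex hull. By convexity (Jensen's inequality) and then symmetry,
\[
  \tilde{\mathcal{F}}(\tilde u)=\tilde{\mathcal{F}}\Bigl(\tfrac{1}{\nhat!}\sum_{\sigma\in S_{\nhat}}\sigma(v)\Bigr)\le \tfrac{1}{\nhat!}\sum_{\sigma\in S_{\nhat}}\tilde{\mathcal{F}}(\sigma(v))=\tilde{\mathcal{F}}(v),
\]
so $\tilde u$ is a global minimizer of $\tilde{\mathcal{F}}$ over the convex hull.

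For the second assertion I would run the same argument but restrict attention to orbits of genuine partition matrices, now using only that the original functional $\mathcal{F}$ of~\cref{eqn:balanced_tv} is itself symmetric (already noted in the text). Given any $u_0\in\Pi(G,\nhat)$, each $\sigma(u_0)$ again lies in $\Pi(G,\nhat)$, so $\tilde{\mathcal{F}}(\sigma(u_0))=\mathcal{F}(\sigma(u_0))=\mathcal{F}(u_0)$, and convexity gives
\[
  \tilde{\mathcal{F}}(\tilde u)\le \tfrac{1}{\nhat!}\sum_{\sigma\in S_{\nhat}}\tilde{\mathcal{F}}(\sigma(u_0))=\mathcal{F}(u_0).
\]
Thus $\tilde u$ attains an objective value no worse than that of any partition matrix, although, absent symmetry of $\tilde{\mathcal{F}}$, strictly lower values may occur elsewhere in the convex hull, so $\tilde u$ need not be a global minimizer.

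I do not expect a serious obstacle: the argument is short once one notices the rigidity that symmetrization sends every point of the polytope to the same barycenter $\tilde u$. The only steps needing care are (i) confirming that $\tilde u$ genuinely lies in the convex hull (handled above by symmetrizing a single partition matrix), and (ii) keeping straight which claim uses symmetry of the relaxation $\tilde{\mathcal{F}}$ and which uses only symmetry of $\mathcal{F}$. I would also remark that the particular form of the balanced TV functional is never used — the conclusion holds for any symmetric functional on $\Pi(G,\nhat)$ — which makes the point that it is the symmetry and affine (product-of-simplices) structure of the partitioning problem, not the choice of energy, that precludes an informative convex relaxation.
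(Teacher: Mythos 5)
Your proof is correct and takes essentially the same route as the paper: average over all column permutations and apply Jensen's inequality, using symmetry of $\tilde{\mathcal{F}}$ for the first claim and only the symmetry of $\mathcal{F}$ (via its values on $\Pi(G,\nhat)$) for the second. Your explicit rigidity observation—that every point of the convex hull symmetrizes to the single barycenter with all entries $1/\nhat$, since row sums equal $1$ are preserved under convex combinations—is a slight sharpening of the paper's presentation, which instead shows via a transposition argument that the columns of $\tilde{u}$ coincide, but the core argument is identical.
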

\begin{proof}
	We consider the symmetric case first.
	Let $u$ lie in the convex hull of $\Pi(G,\nhat)$. We will use the symmetry of $\tilde{\mathcal{F}}$ plus convexity to average all the column permutations of $u$ and get a value of $\tilde{\mathcal{F}}$ at least as low as $u$ gives. Let $\tilde{u} = \frac{1}{\nhat !} \sum_{\sigma\in S_{\nhat}} \sigma(u)$. Then by Jensen's inequality we have
	\[\tilde{\mathcal{F}}(\tilde{u}) = \tilde{\F}\left(\frac{1}{\nhat !}\sum_{\sigma\in S_{\nhat}} \sigma(u)\right) \le \frac{1}{\nhat !} \sum_{\sigma\in S_{\nhat}} \tilde{\F}(\sigma(u)) = \tilde{\F}(u).\]
	Since $u$ was arbitrary, $\tilde{u}$ is a global minimizer.

	Finally, all the columns of $\tilde{u}$ are equal,\footnote{Incidentally, all of the rows are also equal, since row stochasticity is preserved under column permutation.} and thus uninformative. To see this, take any $k,\ell\in\{1,\ldots,\nhat\}$. Let $\tau$ be the permutation that swaps these two values and leaves all the others fixed. Then any $\sigma\in S_{\nhat}$ can be written uniquely as $\tau\circ\sigma'$, with $\sigma' = \tau\circ\sigma$. (Proof: $\tau\circ\tau$ is the identity, so left-multiply by $\tau$.) Thus the $k$-th column of $\tilde{u}$ is given by
	\begin{align}
		\tilde{u}_k &= \frac{1}{\nhat !}\sum_{\sigma\in S_{\nhat}} \sigma(u)_k  \\
		&= \frac{1}{\nhat !}\sum_{\sigma'\in S_{\nhat}}  \tau\circ\sigma'(u)_k  \\
		&= \frac{1}{\nhat !}\sum_{\sigma'\in S_{\nhat}}  \sigma'(u)_{\ell}  \text{    \hspace{.2in} (Note the change in subscript!)}\\
		&= \tilde{u}_{\ell}
	\end{align}
	So all columns of $\tilde{u}$ are equal.

	The non-symmetric case is similar, except that $u$ must lie in $\Pi(G,\nhat)$ since $\tilde{\mathcal{F}}$ is not known to be symmetric. Therefore, in that case, we can only show that the value of $\tilde{\F}$ at $\tilde{u}$ is at least as good as at any point in $\Pi(G,\nhat)$.
\end{proof}
This means that modularity cannot be convexly relaxed using this embedding of $\Pi(G,\nhat)$ in $\mathbb{R}^{N\times\nhat}$.\footnotemark  Thus, our only option to make use of smooth optimization techniques is a non-convex relaxation. In the following subsection, we present one such family of relaxations.
\footnotetext{We do note, however, that by means of a different embedding~\cite{chen_modularity} was able to obtain a convex relaxation with solutions which, while not discrete, are also not trivial. Thus, the embedding requirement is a non-trivial part of our theorem. Other related works include~\cite{chan_modularity} and~\cite{agarwal}.

Note that our proof does not rely on many specific properties of modularity, and indeed, a similar theorem holds for any symmetric quality function over a discrete domain.}

\subsection{Ginzburg-Landau Relaxation}

In this subsection, we develop a way to relax the modularity problem to a continuum domain, which can make the nonconvexity more manageable. In other TV problems arising in materials science and image processing, discrete constraints similar to modularity's are dealt with using the idea of \emph{phase fields}, where a thin transition layer between discrete-valued regions is allowed, making the problem smooth so that it can be attacked by continuum methods. (See e.g.~\cite{gl_example,esedoglu2006threshold,ambrosio_tortorelli,bertozzi_flenner_2012}.) As discussed above, TV is used for two of its properties: promoting small perimeter and encouraging binary results. The Ginzburg-Landau relaxation replaces the TV term with two other terms: the Dirichlet energy and a multiwell potential, each of which has one of the aforementioned properties. Thus the Ginzburg-Landau energy in the continuum is given by
\[
	F_{\epsilon}(u) = \int_U \epsilon ||\nabla u(x)||^2 + \frac{1}{\epsilon}P(u(x))\, dx,
\]
where $\epsilon$ is a small parameter and $P$ is a multiwell potential with local minima at the corners of the simplex, which is the set of nonnegative vectors whose components sum to $1$. The exact form of $P$ will not be important for our purposes, but we will give a concrete example in the next theorem.
A classical result asserts that for $u : U\subset\R \to \R$ and $P$ having minima at $0$ and $1$, we have the following convergence\footnotemark result:
\[
	F_{\epsilon}(u) \xrightarrow{\Gamma}
	\left\{\begin{array}{lr}
		\const |u|_{TV} & \text{if $u$ is binary} \\
		+\infty   & \text{otherwise}
	\end{array} \right.
\]
\footnotetext{See the appendices for an overview of $\Gamma$-convergence.} as $\epsilon\to 0$, under appropriate conditions.

In order to arrive at the graph Ginzburg-Landau functional, observe that if we ignore boundary terms, then integration by parts gives
\begin{align}
	\int_U ||\nabla u||^2 &= \int_U \nabla u\cdot\nabla u
	= \int_U - \Div \nabla u\cdot u
	= \int_U - \Delta u\cdot u,
\end{align}
which suggests that we use a graph Laplacian in our formulation. The Laplacian that is appropriate for our context is the \emph{combinatorial} or \emph{unnormalized Laplacian}, $L = \diag(k) - W.$

In~\cite{bertozzi_flenner_2012}, the idea of using a Ginzburg-Landau functional in graph-based optimization first appeared, and it has subsequently been treated in more depth in~\cite{van_gennip_2012}, where much of the continuum theory was successfully extended to graphs. Our approach closely mirrors~\cite{hui}, the main difference in this case simply being that our functionals have better convexity properties, which allows for different estimates and improved techniques. We begin with a convergence result.
\begin{theorem}[$\Gamma$-convergence for the balanced TV problem]
	Assume $\frac{P(u_i)}{||u_i||}\to\infty$ as $||u_i||\to\infty$, where $u_i$ is the $i$-th row of $u$.
	Then the functionals\footnote{Note that due to the discrete setting, there is no epsilon factor preceding the Laplacian term, see~\cite{van_gennip_2012}.}
	\begin{align}
		\mathcal{F}_{\epsilon} &= ||\nabla u||^2_2 + \frac{1}{\epsilon} \sum_{i=1}^N P(u_i) + \frac{\gamma}{2m}||k^Tu||_2^2 \\
		&:= u^TLu + \frac{1}{\epsilon} \sum_{i=1}^N P(u_i) + \frac{\gamma}{2m}||k^Tu||_2^2,
		\label{eqn:GL}
	\end{align}
	defined over all of $R^N$,
	$\Gamma$-converge to the functional
	\begin{align}
		\begin{cases}
			|u|_{TV} + \frac{\gamma}{2m} || k^Tu||_2^2	&	\text{if $u$ corresponds to a partition} \\
			+\infty 					& 	\text{otherwise}
		\end{cases}
	\end{align}
	In particular,
	\begin{itemize}
		\item for any sequence $\epsilon_n \to 0$, and any corresponding sequence $u_{\epsilon}$ of minimizers of $\mathcal{F}_{\epsilon_n}$, there is a subsequence that converges to a maximizer of modularity, and
		\item any convergent subsequence of the $u_{\epsilon}$ converges to a maximizer of modularity.
	\end{itemize}
	\label{thm:GL}
\end{theorem}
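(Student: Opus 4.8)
The plan is to establish the $\Gamma$-convergence statement first and then derive the two bullet points as standard consequences of $\Gamma$-convergence together with equicoercivity. For the $\Gamma$-convergence I would follow the usual two-part recipe: a liminf inequality and a recovery sequence. Since the domain is a finite-dimensional space $\R^{N\times\nhat}$, the topology is just the Euclidean one, which simplifies matters considerably compared with the continuum setting. The key observation is that the balance term $\frac{\gamma}{2m}\|k^Tu\|_2^2$ is continuous and does not involve $\epsilon$, so it passes to the limit trivially and can essentially be carried along as a fixed continuous perturbation; the real content is the $\Gamma$-convergence of the ``graph Ginzburg--Landau'' part $u^TLu + \frac1\epsilon\sum_i P(u_i)$ to $|u|_{TV}$ restricted to partition matrices. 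This is exactly the graph analogue of the classical Modica--Mortola result, and the version needed here is essentially proved in \cite{van_gennip_2012} and used in \cite{hui}; I would either cite it directly or reprove it in the appendix. For the recovery sequence: given a partition matrix $u$, take the constant sequence $u_\epsilon \equiv u$; then $P(u_i)=0$ for every row (rows of a partition matrix are simplex corners, which are the wells of $P$), so the potential term vanishes identically, and $u^TLu = \sum_\ell \frac12\sum_{ij} w_{ij}|u_{i\ell}-u_{j\ell}|^2 = \sum_\ell \frac12\sum_{ij} w_{ij}|u_{i\ell}-u_{j\ell}| = |u|_{TV}$, using that $u_{i\ell}\in\{0,1\}$ so squares equal absolute values. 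Hence $\mathcal F_\epsilon(u_\epsilon)$ is constant and equals the target value, giving the limsup inequality with no work. If $u$ is not a partition matrix the target is $+\infty$ and there is nothing to prove.

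For the liminf inequality, suppose $u_\epsilon \to u$. If $\liminf \mathcal F_{\epsilon_n}(u_{\epsilon_n}) = +\infty$ there is nothing to prove, so assume it is finite along a subsequence (not relabeled), so $\mathcal F_{\epsilon_n}(u_{\epsilon_n}) \le C$. Then $\frac1{\epsilon_n}\sum_i P((u_{\epsilon_n})_i) \le C$, and since $\epsilon_n\to 0$ and $P\ge 0$ is continuous with zero set exactly the simplex corners, $\sum_i P(u_i)=0$, i.e.\ every row of the limit $u$ is a corner of the simplex, i.e.\ $u\in\Pi(G,\nhat)$ — this is where I would be slightly careful: the wells of $P$ must be precisely the $\nhat$ standard basis vectors, so that a row landing in a well is exactly a node getting assigned to one community. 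Once $u$ is known to be a partition matrix, I use lower semicontinuity: the Dirichlet/Laplacian term $u\mapsto u^TLu$ is continuous, and on the limit $u^TLu = |u|_{TV}$ by the same squares-equal-absolute-values identity as above; meanwhile $\frac\gamma{2m}\|k^Tu\|_2^2$ is continuous. So $\liminf \mathcal F_{\epsilon_n}(u_{\epsilon_n}) \ge \liminf u_{\epsilon_n}^T L u_{\epsilon_n} + \liminf \frac\gamma{2m}\|k^T u_{\epsilon_n}\|_2^2 \ge u^T L u + \frac\gamma{2m}\|k^Tu\|_2^2 = |u|_{TV} + \frac\gamma{2m}\|k^Tu\|_2^2$, which is the target. (Dropping the nonnegative potential term in the liminf is legitimate.)

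For the two bullet points I would invoke the fundamental theorem of $\Gamma$-convergence: if $\mathcal F_{\epsilon_n} \xrightarrow{\Gamma} \mathcal F$ and the family is equicoercive, then minimizers of $\mathcal F_{\epsilon_n}$ (sub)converge to minimizers of $\mathcal F$, and every limit point of minimizers is a minimizer of $\mathcal F$. Equicoercivity is exactly where the hypothesis $P(u_i)/\|u_i\|\to\infty$ enters: it forces $\frac1\epsilon\sum_i P(u_i)$, and hence $\mathcal F_\epsilon$, to grow superlinearly in $\|u\|$ uniformly for small $\epsilon$ (for $\epsilon$ bounded, say, by $1$), so sublevel sets $\{\mathcal F_\epsilon \le C\}$ are uniformly bounded, hence precompact in $\R^{N\times\nhat}$; I'd spell this out as the one genuinely necessary estimate. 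Finally, minimizers of the limit functional are partition matrices minimizing $|u|_{TV} + \frac\gamma{2m}\|k^Tu\|_2^2$, which by \Cref{main_theorem} (the equivalence of \cref{eqn:balanced_tv} with \cref{eqn:modularity_opt}) are exactly maximizers of modularity; this gives both bullets.

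The main obstacle, such as it is, is really just bookkeeping around the potential $P$: one must pin down that its wells are exactly the $\nhat$ vertices of the simplex (so that ``row in a well'' $\Leftrightarrow$ ``valid community assignment''), and one must check that equicoercivity survives the $\frac1\epsilon$ factor — i.e.\ that the relevant range of $\epsilon$ is bounded above so that $\frac1\epsilon \ge$ some positive constant, making the superlinear growth of $P$ actually usable uniformly. There is also a minor subtlety that the limit $u$ of minimizers need not a priori be in the convex hull of $\Pi(G,\nhat)$ — but the equicoercivity estimate plus the potential-term argument forces it into $\Pi(G,\nhat)$ directly, so this resolves itself. Everything else is the standard Modica--Mortola argument specialized to a finite graph, where the continuum analytic difficulties (choosing optimal profiles, controlling the transition layer) collapse because partition matrices already sit exactly at the wells with zero Dirichlet cost equal to their TV.
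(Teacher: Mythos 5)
Your proposal is correct and follows essentially the same route as the paper's proof: the Laplacian and balance terms are continuous and $\epsilon$-independent and so pass through the $\Gamma$-limit, the $\frac{1}{\epsilon}$-scaled potential forces the limit of any bounded-energy sequence onto partition matrices (with constant recovery sequences, and $u^TLu = |u|_{TV}$ on binary matrices), and the bullet points follow from the standard theorem on cluster points of minimizers of a $\Gamma$-convergent sequence together with \Cref{main_theorem}. If anything, your handling of equicoercivity---where the growth hypothesis on $P$ is actually used to guarantee that the minimizers have a convergent subsequence---is more explicit than in the paper, which leaves that compactness step implicit.
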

The proof is given in the appendices.

Moving forward, we focus on minimizing the relaxed functionals from~\cref{thm:GL}. While using the Ginzburg-Landau functional does introduce a Laplacian into our formulation, we stress that this approach is different from spectral approaches, such as those in~\cite{newman_spectral,newman_spectral_2}---the preceding result on $\Gamma$-convergence shows that the real object we are aiming for is TV, which, as discussed in the background section, has very different solutions from quadratic optimization problems. In the results section, we will see numerically that the answers are indeed different from one particular spectral method.

\section{Numerical Scheme}
\label{sec:numerics}

\subsection{MBO iteration}

We minimize the functional from~\cref{eqn:GL} using an adaptation of the graph MBO scheme. We call our approach Balanced TV. The acronym ``MBO'' stands for Merriman, Bence, and Osher~\cite{MBO_1992}, who introduced this algorithm in Euclidean space. It has been widely used as an approach to motion by mean curvature and TV minimization. The connection between graph-based TV and MBO was first made in~\cite{merkurjev_kostic_bertozzi_2013} and~\cite{garcia_merkurjev_bertozzi_percus_2014}. The theoretical study of the algorithm on graphs was initiated in~\cite{van_gennip_2014}. We sketch the logic of MBO here and refer the reader to \cite{MBO_1992} for a more complete treatment. The scheme works by approximating the gradient descent flow of the Ginzburg-Landau functional in the case where $\epsilon$ is very small. Consider the Ginzburg-Landau gradient descent equation (at fixed $\nhat$)
\[ \frac{d}{dt} u = -Lu - \frac{1}{\epsilon} P'(u) -\frac{\gamma}{m} k k^T u.\]
One way to approximate this flow is by operator splitting~\cite[p.22]{tai_splitting} with time-step $dt$ and $t_n =n*dt, n = 0, 1,2, \cdots$. Given $u^n$ one obtains $u^{n+\frac{1}{2}}$ as the solution to
\begin{equation}
	\begin{split}
		\frac{d}{dt} u_1  = -Lu_1  -\frac{\gamma}{m} k k^T u_1, \quad t \in [t_n, t_{n+1}], \\
		u_1 (t_n) = u^n, u^{n+1/2} = u_1(t_{n+1}).
	\end{split}
\end{equation}
Then one gets $u^{n+1}$ by solving
\begin{equation}
	\begin{split}
		\frac{d}{dt} u_2  = - \frac{1}{\epsilon} P'(u_2), \quad t \in [t_n, t_{n+1}],\\
		u_2 (t_n) = u^{n+1/2}, u^{n+1} = u_2(t_{n+1}).
	\end{split}
\end{equation}
The iteration continues until a fixed point is reached.
Such operator splitting schemes are typically first-order accurate in time. In the case where $\epsilon$ is very small, the second flow is essentially a thresholding operation, pushing all values of $u$ into the nearest well, i.e.
\[ u^{n+1}_{i\ell} = \left\{
	\begin{array}{rl}
		1 & \ell = \argmax_{\hat{\ell}} u^{n + \frac{1}{2}}_{i\hat{\ell}} \\
		0 & \text{otherwise}
	\end{array}
	\right.
\]
This gives the MBO scheme:
\begin{tcolorbox}
	\fbox{Balanced TV MBO scheme}\\
	\begin{algorithmic}
		\State Initialize $u$ randomly.
		\State Set $n=0$.
		\While{A stationary point has not been reached}
		\State $u^{n+\frac{1}{2}} = e^{-dt M}u^n$ where $M = L + \frac{\gamma}{m}kk^T$
		\State $u^{n+1} = \threshold(u^{n+\frac{1}{2}}$)
		\State $n = n+1$
		\EndWhile
	\end{algorithmic}
\end{tcolorbox}
The most expensive part of this procedure is evaluating the matrix exponential. We accomplish this efficiently using a pseudospectral scheme, which will be described below.

We treat the forcing term implicitly, which differs from several recent studies, such as~\cite{hui,bertozzi_flenner_2012,merkurjev_kostic_bertozzi_2013}. This can be done efficiently because the operator $M$ is positive semi-definite and can be applied to a vector in linear time, assuming $A$ is sparse. Implicit treatment has the advantage of avoiding an inner loop, which is time-consuming, has a timestep-restriction, and adds another user-set parameter, namely the inner loop timestep. For this reason, the implicit treatment described herein is much easier and faster than the typical nested-loop approach.

As stated, we assume from here on that $A$ is sparse. The case where $A$ is dense could be approached using the Nystr\"om method, as in~\cite{bertozzi_flenner_2012}. Beware, however, that one must find a way to estimate $k$ and $2m$ efficiently, which is not obvious. An alternative is to sparsify the network in preprocessing, which is the approach taken in our examples. This is generally cheap compared to the cost of partitioning the resulting sparse network.
\subsection{Treating the matrix exponential}

As stated above, the most time-intensive step in the MBO iteration is the matrix exponential, and this step is repeated many times. Therefore, it makes sense to use a pseudospectral scheme, as described in, for instance, \cite{bertozzi_flenner_2012}. This means that we precompute the eigenvalues and eigenvectors of $M$, and use them to solve the matrix exponential. By doing the eigenvalue calculation up front, each iteration is greatly accelerated. Here is how the scheme looks:
\begin{tcolorbox}
	\fbox{Pseudospectral Balanced TV MBO scheme}\\
	\begin{algorithmic}
		\State Initialize $u$ randomly.
		\State Calculate the eigenvalues of $M$, and form the diagonal matrix $D$ with its diagonals being the eigenvalues.
		\State Also calculate the eigenvectors and form the matrix $V$ whose columns are the eigenvectors.
		\While{a stationary point has not been reached}
		\State $a^n = V^T u^{n}$.
		\State $a^{n+1} = e^{-dt D}a^n$
		\State $u^{n+\frac{1}{2}} = V a^{n+1}$
		\State $u^{n+1} = \threshold(u^{n+\frac{1}{2}})$.
		\EndWhile
	\end{algorithmic}
\end{tcolorbox}
In practice, it may not be possible to calculate the full spectrum of $M$, if $M$ is large. In this case, we calculate the $N_\textrm{eig}$ smallest eigenvalues and eigenvectors of $M$. Then instead of changing coordinates using a full matrix, use the $N\times N_\textrm{eig}$ matrix $V$ exactly the same way as before. This is equivalent to projecting onto a subspace generated by these eigenvectors, and it makes the algorithms very efficient.

To understand the effect of computing only a few eigenvectors, recall that $M$ is positive semi-definite. Therefore, it has an orthonormal basis of eigenvectors, and the evolution we are solving, namely $\frac{d}{dt} u = -Mu$, can be diagonalized as $a_t = -Da$ where $a = V^Tu$, and $V$ is the full matrix of eigenvalues, and $D$ is a non-negative, diagonal matrix. Therefore, the evolution occurs in distinct ``modes'', with rates of decay controlled by the eigenvalues of $M$. The modes corresponding to small eigenvalues persist longer than those corresponding to large eigenvalues (which experience stiff exponential decay), so that it is not a bad approximation to simply project these components away when it is numerically necessary. Thus, in practice, we collect the smallest eigenvectors of $M$ and the corresponding eigenvectors, neglecting the others.

We use Anderson's iterative Rayleigh-Chebyshev code~\cite{anderson_2010}---which the author kindly provided to us---to get the eigenvalues and eigenvectors.
We generally set $N_\textrm{eig} = 5\nhat$.

\subsection{Determining the number of communities}

The preceding algorithm assumes a fixed $\nhat$. In practice, we found three methods of determining the value of $\nhat$:
\begin{enumerate}
	\item Use domain knowledge---for instance, in two moons, it is known that there are two communities,
	\item Try several values of $\nhat$ and take whichever one produces the best modularity---this works best in cases where there are few communities, as in MNIST. Note that the most time consuming part of the MBO scheme, namely computation of eigenvectors need only be done once, so that several different values of $\nhat$ can be tried without incurring much extra cost.
	\item Recursively partition the network---this works when many communities are present, as in the LFR networks.
		The partition is only made at each step if it increases modularity. This approach worked well in our examples, although in the case of LFR, where $O(N)$ communities are present, a lot of recursion is needed. This is compensated for by the fact that the subgraphs grow smaller and smaller near the end.
\end{enumerate}
%
\subsection{Scaling}

We expect the scaling of our approach to be roughly linear, as suggested by the following informal argument. The main components of the algorithm are
\begin{enumerate}
	\item
		finding eigenvalues and eigenvectors (probably $O(N\log^q N)$ for some $q$),\footnotemark
	\item
		changing coordinates using only the leading eigenvectors ($O(N)$ per iteration, with empirically $O(1)$ iterations needed to converge),
	\item
		evaluating the exponential of a vector componentwise (also $O(N)$ per iteration), and
	\item
		thresholding ($O(N)$ per iteration).
\end{enumerate}
\footnotetext{There is no rigorous result for the Rayleigh-Chebyshev procedure, but numerical evidence suggests strongly better than quadratic convergence, and $O(N\log^qN)$ is the convergence speed for some similar algorithms.}
The preceding estimates all apply in the case where no recursion is needed, i.e.\ the number of communities is known in advance. If the recursion is done by partitioning the graph into $\nhat$ pieces at each level, then the cost is heuristically on the order of
\[ \tilde{O}(N) + \nhat \tilde{O}\left(\frac{N}{\nhat}\right) + \nhat^2 \tilde{O}\left(\frac{N}{\nhat^2}\right) + \cdots + O(N) O(1) = \tilde{O}(N) \]
where $\tilde{O}$ means that logarithmic terms are neglected, and each term in the sum is the product of the number of partitioning problems to be solved with the size of the partitioning problems. This scalability is roughly borne out in our example data sets, although we warn that there are additional complications, based on the varying number of communities to be produced, differences in the efficiency of parallelization at different scales, and possibly other factors.

\subsection{On the choice of timestep}
Our approach requires the selection of parameters $\gamma,dt,$ $N_\textrm{eig}$, $\nhat$, and various other parameters and methods. In order to simplify the exploration of this parameter space in practical applications, it is useful to have some theory about the choice of these parameters. Here, we describe how to set $dt$ in the MBO scheme. This is especially useful in the recursive implementation, as the appropriate timestep empirically decreases as the graph gets smaller, and it would be laborious for a human to check at each recursion step. 

Our derivations are inspired by those in~\cite{van_gennip_2014}, and proofs are deferred to an appendix. First, we consider a lower bound on the timestep:
\begin{proposition}[Lower bounds on the timestep]
	Let $u_0\in\Pi(G,\nhat)$. If $u$ satisfies $\frac{d}{dt} u = -M u$ with initial data $u_0$, then we have the following bounds:
	\begin{enumerate}
		\item 
			
			\[
				||u(\tau) - u_0||_\infty \le e^{2 (\gamma + 1) k_{\mathrm{max}} \tau }.
			\]
		\item
			In the case where $\nhat = 2$, this bound implies that if the MBO timestep $\tau$ satisfies
			\[
				\tau < \frac{ \log 2 }{ 2(\gamma + 1) k_{\mathrm{max}} } \approx \frac{0.15}{(\gamma+1) k_{\mathrm{max}}},
			\]
			then the MBO iteration is stationary.
		\item 
			If $\rho$ is the spectral radius of $M$, we also have
			\[
				||u(\tau) - u_0||_\infty \le \sqrt{\nhat}||u_0||_2\left( e^{\tau\rho} - 1 \right).
			\]
		\item
			If $\nhat = 2$, the MBO iteration is guaranteed to be stationary whenever
			\[
				\tau < \rho^{-1} \log\left( 1 + N^{-\frac{1}{2}} \right).
			\]
	\end{enumerate}
	\label{prop:freezing}
\end{proposition}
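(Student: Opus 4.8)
The plan is to prove the four bounds by tracking the evolution $u(t) = e^{-tM}u_0$ and showing that, for small enough $\tau$, the thresholding step returns $u_0$ unchanged. The key structural observation is that $u_0 \in \Pi(G,\nhat)$ means every row of $u_0$ is a standard basis vector; in particular the ``winning'' column in row $i$ has value $1$ and every other column has value $0$, so there is a margin of $1$ between the maximal entry and the runner-up. The thresholding step $\threshold(u(\tau))$ returns $u_0$ precisely when, in each row, the column that was $1$ in $u_0$ still strictly exceeds all the others at time $\tau$; a sufficient condition is $\|u(\tau) - u_0\|_\infty < 1/2$, and when $\nhat = 2$ the gap argument can be sharpened to $\|u(\tau)-u_0\|_\infty < 1/2$ being replaced by a condition that lets the entries move by nearly the full margin before the order flips (this is where the $\log 2$ rather than $\log(3/2)$-type constant comes from — I will need to chase the exact two-column bookkeeping).

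For the first bound I would estimate $\frac{d}{dt}(u - u_0) = -Mu = -M(u-u_0) - Mu_0$ and bound things entrywise. The cleanest route: write $M = L + \frac{\gamma}{m}kk^T$, note $L = \diag(k) - W$ has nonnegative off-diagonal structure making $-L$ (and more generally $-M$ after a shift) generate a positive semigroup up to the diagonal, and use Gronwall on $\|u(t)-u_0\|_\infty$. Concretely, each row of $Mu$ has $\ell^\infty$ norm controlled by the max absolute row sum of $M$, which is at most $k_{\max} + k_{\max} + \gamma \cdot k_{\max} \cdot \frac{2m}{2m} \le 2(\gamma+1)k_{\max}$ after accounting for $L$'s row sums ($k_i$ from the diagonal plus $k_i$ from the off-diagonals) and the $kk^T$ term's row $i$ having sum $k_i \cdot 2m / m = 2k_i$ wait — I need to be careful: the $\frac{\gamma}{m}kk^T$ row $i$ sums to $\frac{\gamma}{m}k_i \sum_j k_j = \frac{\gamma}{m}k_i \cdot 2m = 2\gamma k_i \le 2\gamma k_{\max}$, combining with $2k_{\max}$ from $L$ to give the $2(\gamma+1)k_{\max}$ constant. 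Then $\|u(t) - u_0\|_\infty \le \int_0^t 2(\gamma+1)k_{\max}\|u(s)\|_\infty\,ds$, and since $u$ is a convex combination of basis-like vectors evolving under a substochastic-ish flow, $\|u(s)\|_\infty \le \|u_0\|_\infty \cdot (\text{growth})$ — Gronwall then yields the stated exponential $e^{2(\gamma+1)k_{\max}\tau}$ (note the bound is stated without a ``$-1$'', suggesting they bound $\|u(\tau)-u_0\|_\infty$ by $\|u(\tau)\|_\infty$ plus triangle inequality, or absorb constants generously). Part 2 then follows by demanding this quantity stay below the threshold margin: in the $\nhat=2$ case the relevant condition ends up being $e^{2(\gamma+1)k_{\max}\tau} - 1 < 1$, i.e.\ the exponential below $2$, which rearranges to $\tau < \frac{\log 2}{2(\gamma+1)k_{\max}}$, and $\frac{\log 2}{2} \approx 0.347$, hmm that gives $0.35$ not $0.15$ — so the actual bound being applied must be $e^{2(\gamma+1)k_{\max}\tau} - 1$ on the left and the two-column margin requires staying below something like $1$, but with the $\log 2 \approx 0.69$ divided by $2(\gamma+1)k_{\max}$ giving $\approx 0.35/(\gamma+1)k_{\max}$; to land on $0.15$ one needs $\log 2$ over roughly $4(\gamma+1)k_{\max}$, so I suspect the honest derivation tracks \emph{both} coordinates moving (one down, one up, each by up to half the gap) — I will follow the van Gennip--style argument to pin down the factor, and this factor-chasing is the fiddly part.

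For parts 3 and 4 I would instead use the spectral decomposition $M = VDV^T$ directly: $u(\tau) - u_0 = (e^{-\tau D'} - I)$-type expressions are awkward because $M$ is positive semidefinite so $e^{-\tau M}$ contracts, but I want an \emph{upper} bound on the displacement. Write $u(\tau) - u_0 = (e^{-\tau M} - I)u_0 = -\int_0^\tau M e^{-sM}u_0\,ds$, so $\|u(\tau)-u_0\|_2 \le \int_0^\tau \|M e^{-sM}\|_{op}\|u_0\|_2\,ds$. Using the spectral radius $\rho$ of $M$ and the scalar inequality $\lambda e^{-s\lambda} \le$ (bound), or more simply bounding $\|e^{-\tau M} - I\|_{op} \le e^{\tau\rho} - 1$ via the power series $\sum_{j\ge 1}\frac{\tau^j}{j!}\|M\|_{op}^j$ with $\|M\|_{op} = \rho$, gives $\|u(\tau)-u_0\|_2 \le (e^{\tau\rho}-1)\|u_0\|_2$; then $\|\cdot\|_\infty \le \|\cdot\|_2$ row-wise, and summing over $\nhat$ columns (or using $\|x\|_\infty \le \|x\|_2$ on the vectorization with a $\sqrt{\nhat}$ from the column count) produces the stated $\sqrt{\nhat}\|u_0\|_2(e^{\tau\rho}-1)$. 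For part 4, with $\nhat = 2$, $\|u_0\|_2 = \sqrt{N}$ (one nonzero per row, value $1$, $N$ rows — but $u_0$ is $N\times 2$ so $\|u_0\|_2^2 = N$ counting all entries, hence $\|u_0\|_2 = \sqrt{N}$), and $\sqrt{2}$ from $\nhat$; requiring the displacement below the threshold margin (which for $\nhat=2$ effectively needs $\sqrt{N}(e^{\tau\rho}-1) < 1$, i.e.\ $e^{\tau\rho} < 1 + N^{-1/2}$) yields $\tau < \rho^{-1}\log(1 + N^{-1/2})$. The main obstacle throughout is not any single inequality but getting the constants and the ``margin-to-displacement'' translation exactly right — especially reconciling the claimed $0.15$ in part 2 — which requires carefully distinguishing the sufficient condition ``$\|u(\tau)-u_0\|_\infty$ small'' from the sharper ``no row changes its argmax,'' and I would lean on the van Gennip reference cited in the text to fix the precise form.
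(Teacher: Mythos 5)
Your estimates for parts 1 and 3 are essentially the paper's own: part 1 is the Duhamel/power-series bound $\|u(\tau)-u_0\|_\infty \le \|e^{-\tau M}-I\|_\infty\,\|u_0\|_\infty \le e^{\tau\|M\|_\infty}-1$ combined with the row-sum computation $\|M\|_\infty \le 2(1+\gamma)k_{\mathrm{max}}$ (your accounting of the contributions of $L$ and $\tfrac{\gamma}{m}kk^T$ matches the paper's), and part 3 is the same chain in the $2$-norm with $\|M\|_2=\rho$ and a $\sqrt{\nhat}$ from passing to the entrywise/rowwise maximum. So far, same route.

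The genuine gap is in parts 2 and 4, the actual timestep conclusions, and you flag it yourself ("chase the exact two-column bookkeeping", "lean on the van Gennip reference"). The missing idea is the column-difference trick: for $\nhat=2$ set $v=u^1-u^2$. By linearity $v$ satisfies the same equation $v_t=-Mv$, its initial data has entries $\pm 1$ (so $\|v_0\|_\infty=1$ and $\|v_0\|_2=\sqrt{N}$), and node $i$ changes class under thresholding only if $v_i$ changes sign, which forces $|v_i(\tau)-v_{0,i}|\ge 1$. Applying the part-1 estimate directly to $v$ gives stationarity whenever $e^{\tau\|M\|_\infty}-1<1$, i.e.\ $\tau<\log 2/(2(1+\gamma)k_{\mathrm{max}})$; applying the part-3 estimate to $v$ (a single column, so no $\sqrt{\nhat}$, and $\|v_0\|_2=\sqrt{N}$) gives stationarity when $\sqrt{N}\,(e^{\tau\rho}-1)<1$, i.e.\ $\tau<\rho^{-1}\log(1+N^{-1/2})$ --- precisely the condition you wrote down as "effectively needs $\sqrt{N}(e^{\tau\rho}-1)<1$" but could not justify. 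Your alternative, bounding each column's motion by half the unit margin, only yields the weaker constants $\log(3/2)$ and $\log\bigl(1+\tfrac{1}{2\sqrt{2}}N^{-1/2}\bigr)$, so without the $v$-trick the stated bounds are not reached. Finally, your factor-chasing aimed at reproducing "$0.15$" is a red herring: the argument yields exactly $\log 2/(2(\gamma+1)k_{\mathrm{max}})\approx 0.35/((\gamma+1)k_{\mathrm{max}})$, and the "$\approx 0.15$" printed in the statement is merely an inaccurate numerical gloss (it is what a base-10 logarithm would give); no hidden extra factor of two needs to be found in the derivation.
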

Although we had to restrict to $\nhat=2$ in the above, we used the timestep restriction regardless of $\nhat$---indeed the authors expect that $\nhat=2$ is the worst case, although we are unable to prove it at present.

The upper bound on the timestep is more delicate. Normally, the upper bound would be determined by convergence theory, using error bounds and stability estimates, the theory of which is incomplete in the graph setting at present. Instead, we use the following heuristic to motivate our bounds: In most cases, $M$ is strictly positive definite, so the evolution $\frac{d}{dt} u = -Mu$ forces $u$ to decay toward $0$. The idea behind MBO is that the diffusion effects give information about curvature on short time scales, and the long time scales give information about more global quantities, which is useless in that context. Therefore, in the graph context, it makes sense to try to understand the time scale that is ``long'' and set the timestep to be shorter than that. Using the approach to $0$ as a convenient notion of long-time behavior, we obtain the following useful bounds:
\begin{proposition}[Decay estimates for $M$]
	Let $\frac{d}{dt} u = -M u$ with initial data $u_0\in\Pi(G,\nhat)$. Then the following bounds hold:
	\begin{enumerate}
		\item
			Assume $\lambda_1$ is the smallest eigenvalue of $M$. Then
			\[
				||u||_2 \le e^{-\tau \lambda_1} || u_0 ||_2.
			\]
		\item
			Let $M$ be nonsingular. Then for any $\epsilon > 0$, we have $||u(\tau)||_\infty < \epsilon$ if 
			\[
				\tau > \lambda_1^{-1} \log\left( \frac{||u_0||_2}{\epsilon} \right).
			\]
	\end{enumerate}
	\label{prop:steady_state}
\end{proposition}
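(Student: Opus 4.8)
The plan is to exploit the fact that the linear ODE $\frac{d}{dt}u = -Mu$ has the closed-form solution $u(\tau) = e^{-\tau M}u_0$, and that $M = L + \frac{\gamma}{m}kk^T$ is symmetric positive semi-definite (in fact positive definite in the generic case, which is the hypothesis for part 2). Since $M$ is symmetric, it admits an orthonormal eigenbasis with real eigenvalues $0 \le \lambda_1 \le \lambda_2 \le \cdots \le \lambda_N$, so that $e^{-\tau M}$ has eigenvalues $e^{-\tau\lambda_j}$ in the same basis. The operator norm of $e^{-\tau M}$ on $\ell^2$ is therefore exactly $\max_j e^{-\tau\lambda_j} = e^{-\tau\lambda_1}$.

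For part 1, I would expand $u_0$ in the eigenbasis of $M$, write $u(\tau) = \sum_j e^{-\tau\lambda_j}\langle u_0, v_j\rangle v_j$, and apply Parseval/orthonormality to get
\[
\|u(\tau)\|_2^2 = \sum_j e^{-2\tau\lambda_j}|\langle u_0,v_j\rangle|^2 \le e^{-2\tau\lambda_1}\sum_j |\langle u_0,v_j\rangle|^2 = e^{-2\tau\lambda_1}\|u_0\|_2^2,
\]
and take square roots. (Equivalently: $\|e^{-\tau M}\|_{2\to 2} = e^{-\tau\lambda_1}$ because the spectral mapping theorem applies to the self-adjoint operator $M$.) This handles the $\ell^2$ bound with no real obstacle.

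For part 2, when $M$ is nonsingular we have $\lambda_1 > 0$, so part 1 gives genuine exponential decay. I would then pass from the $\ell^2$ bound to the $\ell^\infty$ bound using the elementary inequality $\|u(\tau)\|_\infty \le \|u(\tau)\|_2$ (valid entrywise for matrices as well, treating $u$ as a flattened vector), obtaining $\|u(\tau)\|_\infty \le e^{-\tau\lambda_1}\|u_0\|_2$. Setting the right-hand side strictly less than $\epsilon$ and solving for $\tau$ gives $e^{-\tau\lambda_1} < \epsilon/\|u_0\|_2$, i.e.\ $\tau\lambda_1 > \log(\|u_0\|_2/\epsilon)$, i.e.\ $\tau > \lambda_1^{-1}\log(\|u_0\|_2/\epsilon)$, which is exactly the claimed condition. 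One should note $\|u_0\|_2 \ge 1 > 0$ for $u_0 \in \Pi(G,\nhat)$ (each row is a standard basis vector, so $\|u_0\|_2 = \sqrt{N}$ in fact), so the logarithm is well-defined; if $\epsilon \ge \|u_0\|_2$ the bound is trivially satisfied for all $\tau > 0$.

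The only mild subtlety — not really an obstacle — is being careful that positive definiteness of $M$ (hence $\lambda_1 > 0$) is genuinely needed in part 2 and is exactly the stated hypothesis; for part 1 only positive semi-definiteness is used, and the bound degenerates gracefully to $\|u(\tau)\|_2 \le \|u_0\|_2$ when $\lambda_1 = 0$. Everything else is routine spectral calculus for a self-adjoint matrix, so I expect the write-up to be short.
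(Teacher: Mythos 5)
Your proposal is correct and follows essentially the same route as the paper: both rest on the spectral decomposition of the symmetric positive semi-definite matrix $M$, the identity $\|e^{-\tau M}\|_2 = e^{-\tau\lambda_1}$ for the $\ell^2$ bound, and then passing to $\|\cdot\|_\infty \le \|\cdot\|_2$ and solving $e^{-\tau\lambda_1}\|u_0\|_2 < \epsilon$ for $\tau$. Your write-up merely makes explicit a few details the paper leaves implicit (the Parseval expansion, the norm comparison, and the need for $\lambda_1>0$ in part 2), so no further changes are needed.
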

In practice, setting the timestep as the geometric mean between this upper bound and the lower bound from Proposition~\ref{prop:freezing} has produced good results without resorting to hand-tuning of parameters.\footnote{We also found empirically that a simple time stepping procedure improved results sometimes: Let the algorithm run to convergence, then continue with a smaller timestep until convergence occurs again.}

\section{Results}
\label{sec:results}

\subsection{Summary}

Tables~\ref{six_table} and~\ref{hsi_table} summarize the results of our Balanced TV algorithm on several examples, mostly drawn from machine learning and image processing problems.~\footnote{We also performed some brief tests of our method on biological and social networks but found that the results were not as encouraging, apparently due to some structural differences from our machine learning networks---it would be interesting to understand this issue more.} We compared our method to the Modularity MBO algorithm from Hu et al.~\cite{hui}, as well as three other well-known algorithms: the Louvain method~\cite{blondel_2008}, the hierarchical method of Clauset, Newman, and Moore~\cite{clauset_modularity}, and a classic spectral recursive bipartitioning method of Newman~\cite{newman_spectral}. Our own method and that of Hu et al.\ were written in MATLAB except for the eigenvector computations, which use Anderson's Rayleigh-Chebyshev code~\cite{anderson_2010}, written in C++ with OpenMP support. The three other methods are slight modifications of igraph's C library implementations~\cite{igraph}. In practice, the difference in programming language may make a difference in speed, although the eigenvalue computation is typically the most time-intensive part of the computation. We chose a single conservative timestep for Modularity MBO rather than hand-tuning for each experiment. Our method and that of Hu et al.\ use a random starting seed, so we ran those codes 20 times and report the best modularity and classification rate and the median time. 

Overall, we found that our method is competitive with the state of the art on these data sets. Our method generally found higher-modularity partitions and had faster run times than either the method of Hu et al.\ or of Newman.\footnote{We chose this particular spectral method because it was available in igraph. A complete comparison with other spectral methods would be interesting but is beyond the scope of this paper.} The Louvain method and our method often gave similar modularity scores, although the partitions they uncovered were not necessarily similar. For example, on the MNIST example, our method achieved the better modularity score, but the Louvain partition matched the true labels more closely. On the Plume40 example, the opposite effect occurs, with our method achieving the lower modularity score but finding a partition that is closer to the true labeling of the pixels. Such issues are a manifestation of the well-known degeneracy of the modularity energy~\cite{clauset_2010}, where a number of dissimilar partitions can receive similarly high modularity scores. It is also an indication that modularity needs to be complemented with supervision, regularization, biased initialization, or some other device in order to reliably find the partition that is most appropriate for the problem. In Figure~\ref{ssl_table}, we illustrate the effectiveness of including a small amount of supervision with our method. (See~\cref{eqn:ssl}.)

\begin{table}
\setlength\tabcolsep{5.5pt} 
	\begin{tabular}{l|lllllll}
		\toprule
		&			&	{Moons}	&	{MNIST}	&      {LFR50k}	&	{Urban}	&	{Plume7}	&      {Plume40}\\
		\midrule                                                                                 
		&	Nodes		&	2,000	&	70,000	&	50,000	&	94,249	&	286,720	&$1.6* 10^6$	\\
		&	Edges		&$1.8*10^4$	&$4.7* 10^5$	&$7.9* 10^5$	&$6.8* 10^5$	&$5.3*10^6$	&$2.9* 10^7$	\\
		&	Communities 	& 	2	&	10	&	2,000	&	5	&	5	&	5	\\
		& 	Res. Param.	&	0.2	&	0.5	&	15	&	0.1	&	1	&	1	\\
		\midrule                                                                                
		\multirow{6}{*}{Modularity}                                                              
		&	Our method	& 	0.84	&	0.92	&	0.77	&	0.95	&	0.76	&	0.64	\\
		&	Hu et al.	& 	0.85	&	0.91	&	0.58	&	0.95	&	0.74	&	0.64	\\
		&	Hierarchical	&	0.77	& 	0.88	&	0.88	&	0.94	&	0.65	&	0.92	\\
		&	Louvain		&	0.72	& 	0.83	&	0.89	&	0.90	&	0.78	&	0.97	\\
		&	Spectral		&	0.60	& 	0.56	&	-5.88	&	0.90	&	0.30	&	0.04	\\
		&	Ref		&	0.83	& 	0.92	&	0.89	&	0.90	&	0.00	&	0.00	\\
		\midrule                                                                                
		\multirow{2}{*}{Classification}                                                         
		&	Our method	&	0.97	& 	0.90	& 	0.92	& 	----	&	----	&	----	\\
		&	Hu et al.	&	0.95	& 	0.80	& 	0.72	& 	----	&	----	&	----	\\
		&	Hierarchical	& 	0.98	&	0.93	& 	0.80	& 	----	& 	----	&	----	\\
		&	Louvain		& 	0.98	&	0.96	& 	0.87	& 	----	& 	----	&	----	\\
		&	Spectral		& 	0.95	&	0.30	& 	0.09	& 	----	& 	----	&	----	\\
		\midrule
		\multirow{6}{*}{\begin{tabular}{@{}l@{}}Time \\ (sec.)\end{tabular}}
		&	Our method	&	0.55	&	59	&	63	&	19	&	135	&	1284	\\
		&	Hu et al.	&	0.80	&	167	&	206	&	42	&	152	&	39196	\\
		&	Hierarchical	&	0.55	&	16	&	6	&	44	&	3066	&	9437	\\
		&	Louvain		&	0.38	&	9	&	6	&	14	&	89	&	520	\\
		&	Spectral		&	0.87	&	301	&	1855	&	24	&	265	&	1804	\\
		\bottomrule 
	\end{tabular}
	\caption{
		Results on six data sets. Our method generally does better than that of Newman and Hu et al. It is also notable that the choice of metric matters. For instance, on the MNIST example, the Louvain method gets a worse modularity score than our method but better agreement with the ground truth labels. Conversely, our method gets a lower modularity score than Louvain on the Plume40 example, but the segmentation our method produced for Figure~\ref{plume_images} more closely agrees with domain experts' knowledge of how the plume really looks. See Table~\ref{ssl_table} for an example of how a small amount of supervision with our method reduces this ambiguity. Dashes denote missing entries in cases where metadata was not available. The LFR50k example illustrates the ability of our approach to deal with a large number of small communities using recursive partitioning.
	}
	\label{six_table}
	\setlength\tabcolsep{6pt} 
\end{table}
\begin{table}
\setlength\tabcolsep{5.5pt} 
\begin{tabular}{l|llllllll}
\toprule
&			&{Jas. Rid.} &	{Samson}&{Cuprite}	&	{FLC}	&{Pavia U}	&{Salinas}	&	{Salinas 1}\\
\midrule                                                                                 
&	Nodes		&	19,800	&	14,820	&	30,162	&	208,780	&	207,400	&	7,092	&	111,063	\\
&	Edges		&$1.1*10^5$	&$8.3*10^4$	&$1.6*10^5$	&$1.5*10^6$	&$1.6*10^6$	&$4.7*10^5$	&$8.4*10^5$	\\
&	Communities 	& 	4	&	3	&	12	&	3	&	9	&	6	&	16	\\
\midrule                                                                                
\multirow{6}{*}{Modularity}                                                              
&	Our method	&	0.99	&	0.98	&	0.99	&	0.94	&	0.93	&	0.97	&	0.96	\\
&	Hu et al.	&	0.99	&	0.98	&	0.90	&	0.94	&	0.94	&	0.97	&	0.96	\\
&	Hierarchical	&	0.98	&	0.98	&	0.99	&	0.93	&	0.93	&	0.97	&	0.96	\\
&	Louvain		&	0.99	&	0.98	&	0.99	&	0.90	&	0.88	&	0.95	&	0.95	\\
&	Spectral		&	0.91	&	0.90	&	0.91	&	0.90	&	0.90	&	0.96	&	0.90	\\
&	Ref		&	0.90	&	0.90	&	0.90	&	0.90	&	0.90	&	0.90	&	0.90	\\
\midrule
\multirow{6}{*}{\begin{tabular}{@{}l@{}}Time \\ (sec.)\end{tabular}}
&	Our method	&	17	&	13	&	42	&	121	&	160	&	4.6	&	96	\\
&	Hu et al.	&	40	&	27	&	63	&	203	&	270	&	3.3	&	117	\\
&	Hierarchical	&	1.5	&	1.1	&	2.4	&	378	&	411	&	0.74	&	66	\\
&	Louvain		&	1.5	&	1.2	&	2.7	&	39	&	40	&	0.75	&	15	\\
&	Spectral		&	28	&	10	&	148	&	38	&	65	&	6.5	&	24	\\
\bottomrule 
\end{tabular}
\caption{Results on additional hyperspectral data sets. The resolution parameter was $0.1$, and the reference partition has all nodes in the same community. Our method achieves a top modularity score in each network except for Salinas, where Hu et al.'s method gets slightly higher results. Our method partitioned recursively and initialized with kmeans clustering on leading eigenvectors that had been computed for use in the pseudospectral scheme.}
\label{hsi_table}
\setlength\tabcolsep{6pt} 
\end{table}
\begin{table}
	\centering
\setlength\tabcolsep{5.5pt} 
	\begin{tabular}{l|lll}
		\toprule
		&			&	{Moons}	&	{MNIST}	\\
		\midrule                                                
		\multirow{3}{*}{Modularity}                             
		& 	Unsupervised 	& 	0.84	&    	0.91 	\\
		& 	10\% supervised	& 	0.84	&    	0.92 	\\
		& 	Reference 	&	0.83 	&    	0.92  	\\
		\midrule                                                
		\multirow{2}{*}{Classification}                                 
		&	Unsupervised 	&	0.97	&	0.90	\\
		&	10\% supervised	&	0.97	&	0.97	\\
		\midrule
		\multirow{2}{*}{Modularity Consistency}                 
		&	Unsupervised 	&	0.75	&	0.65	\\
		&	10\% supervised &	1.00	&	1.00	\\
		\midrule
		\multirow{2}{*}{Classification Consistency}                 
		&	Unsupervised 	&	0.75	&	0.05	\\
		&	10\% supervised &	1.00	&	0.65	\\
		\bottomrule 
	\end{tabular}
	\caption{
		Results of our method using networks constructed from the two moons and MNIST examples with and without 10\% supervision. Consistency here denoted percent of cases for which the results were within 2\% of the best value achieved. 
		In the two moons example, supervision improves consistent matching to metadata. In the MNIST example, both consistency and peak metadata matching are substantially improved. Note that in both cases, the peak modularity is not changed, indicating that the supervision helps the solver find local maxima that are more relevant to the classification task, thus addressing the well-known degeneracy issues of modularity's energy landscape.
		The code was run 20 times on each example.
	}
	\label{ssl_table}
	\setlength\tabcolsep{6pt} 
\end{table}

\subsection{Analysis of each experiment}

We now describe the individual experiments.

\begin{figure}
	\centering
	\includegraphics[width=.5\textwidth]{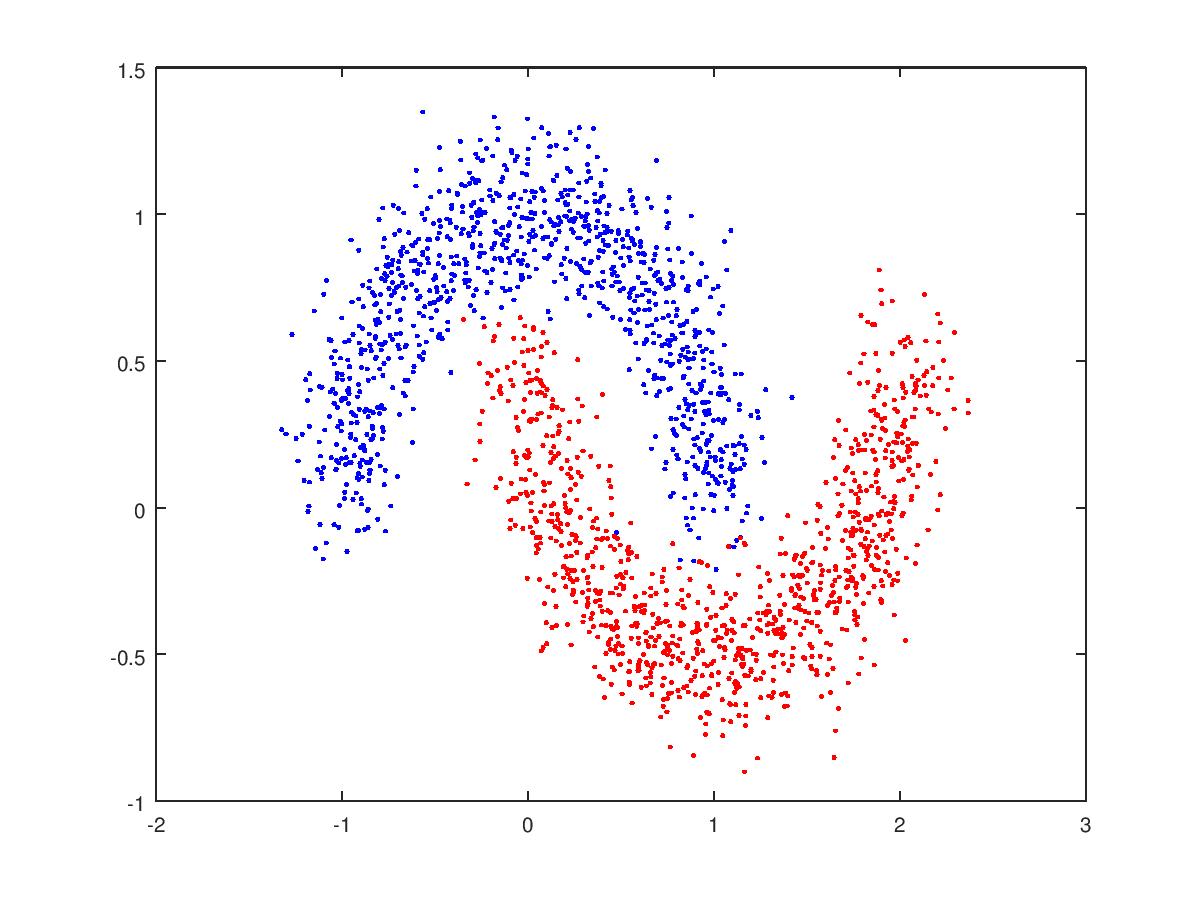}
	\caption{Projection of the two moons example onto two dimensions}
	\label{tm_fig}
\end{figure}
{\bf Two Moons} Two moons consists of 2,000 points in 100-dimensional space, sampled from two half-circles, with Gaussian noise added, see Figure~\ref{tm_fig}. We constructed a 13-nearest neighbors graph with the edge weights given by a Gaussian law, with locally-determined decay parameters \cite{zelnik_manor_perona}. The number of classes was assumed known, where the class of a point is the half-circle to which it originally belonged.

{\bf MNIST} MNIST consists of 70,000 28x28-pixel images, each of which contains a single handwritten digit~\cite{lecun_cortes_burges_mnist}. The task is to identify the digit in each image. The graph was constructed by projecting onto 50 principle components for each image and then using a 10-nearest neighbors graph with self-tuning Gaussian decay~\cite{zelnik_manor_perona}. The number of classes was assumed known. As in~\cite{hui}, 11 classes were used, as there are two different ways to write the digit $1$, with or without the top flag and flat base. This modularity landscape was particularly troublesome, with about $25\%$ of the partitions we found having better modularity than the ground truth partition, despite the fact that partitions with a classification accuracy greater than $95\%$ were found only about $4\%$ of the time. 

{\bf LFR 50k} This is a well-known ensemble of artificial networks~\cite{LFR_2008}. We used the following parameters to generate it: average degree of 20, maximum degree of 50, degree distribution exponent of 2, community size distribution exponent of 1, effective mixing parameter of 0.2, maximum community size of 50, minimum community size of 10. The large number of small communities makes this a challenging problem---similar experiments on a 1,000-node networks with 40 communities gave near-perfect classification. We use purity to gauge classification accuracy. Given two partitions $g_1$ and $g_2$, the purity is defined as $\frac{1}{N} \sum_{\alpha=1}^{\nhat} \max_{\beta=1,\ldots,\nhat} \#\{ i : g_1=\alpha \text{ and } g_2=\beta\}$, where $\#$ denotes the cardinality.

{\bf Urban Image} The urban hyperspectral image is a $307\times307$ image of an urban setting, where each pixel encodes the intensity of light at 129 different wavelengths. The classification problem is to identify pixels that contain similar materials, such as dirt, road, grass, etc. 

The graph representation was computed using ``nonlocal means''~\cite{buades_2005}, which means that for each pixel $p$, a vector $v_p$ was constructed by concatenating the data in a $3\times3$ window centered at $p$. One then uses a weighted cosine distance on these $3\times3\times 162 = 1,458$ component vectors, where the components from the center of the window are given the most weight. For each pixel, we obtained the 10 nearest neighbors in this distance using a k-d tree and the VLFeat software package~\cite{vlfeat}. The images in~\cref{wz_images} were selected from a collection of 200 segmentations as being the most visually appealing. We compared with a recent NLTV-based algorithm~\cite{wei_zhu}, which is specifically designed for hyperspectral imaging applications and found our segmentation competitive. We also compared with Modularity MBO and GenLouvain~\cite{GenLouvain} segmentations. For instance, Balanced TV does well at placing the grass into a single class and correctly resolved the difference between pavement and dirt. Balanced TV gives the sharpest resolution of the roads and the surrounding dirt in the upper right. Our method does have a little trouble compared to GenLouvain when resolving the buildings just below the large road in the upper left corner of the picture, although this is partly due to the fact that the roofs there are made of different materials from most of the houses further down in the image, and NLTV has a similar problem.

\begin{figure}
	\centering
	\begin{tabular}{cc}
		\begin{tabular}{c}
			\includegraphics[width=2in]{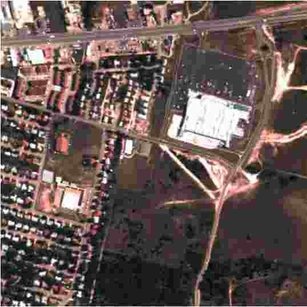}\\ 
			RGB image
		\end{tabular}
		&
		\begin{tabular}{c}
			\includegraphics[width=2in]{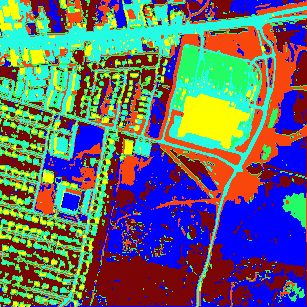}\\
			Our method
		\end{tabular}
		\\
		\begin{tabular}{c}
			\includegraphics[width=2in]{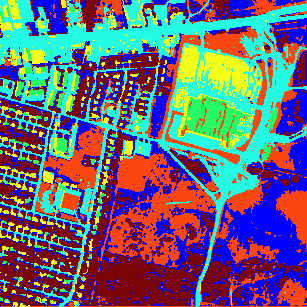}\\
			Modularity MBO
		\end{tabular}
		&
		\begin{tabular}{c}
			\includegraphics[width=2in]{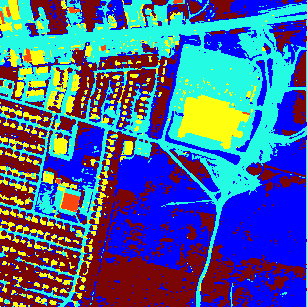} \\
			GenLouvain segmentation
		\end{tabular}
		\\
		\begin{tabular}{c}
			\includegraphics[width=2in]{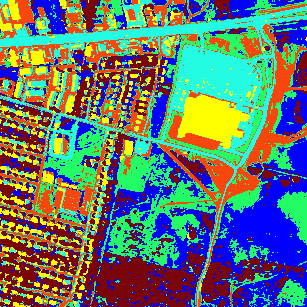}\\
			NLTV segmentation~\cite{wei_zhu}
		\end{tabular}
		&
	\end{tabular}
	\caption{
		The urban dataset segmented using different methods.
		Our method effectively separates the dirt from roads, resolving the roads in the upper right corner, and placing all of the grass into a single class. It has some difficulty with the buildings in the upper left corner, just below the main road, which are a different material from the other buildings.
	}
	\label{wz_images}
\end{figure}

{\bf Plume Hyperspectral Video} The gas plume hyperspectral video records a gas plume being released at the Dugway Proving Ground~\cite{gerhart2013detection,plumes_source,merkurjev_hyperspectral_2014}.\footnote{In~\cite{merkurjev_hyperspectral_2014}, a semi-supervised MBO-type approach was used.} The graph was constructed by the same procedure as the urban dataset, simply concatenating each frame side-by-side into one large image and using nonlocal means to form the graph. Each frame has $320\times 128$ pixels with data from $129$ wavelengths. Two versions of this dataset were used, one with 7 frames, and another with 40 frames. We have included the segmentation of one frame in Figure~\ref{plume_images}, together with segmentations produced by competing algorithms. Our method is the only one that places the entire plume in a single class. The images shown were chosen as the best out of thirty for visual appeal. 
\begin{figure}[H]
	\centering
	\begin{tabular}{cc}
		\includegraphics[width=2.8in]{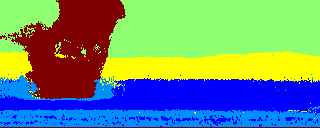} &
		\includegraphics[width=2.8in]{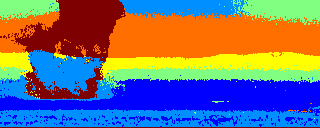} \\
		\small Our method & Spectral Clustering \\
		\includegraphics[width=2.8in]{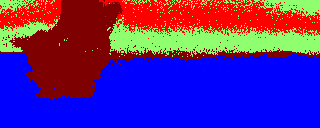} &
		\includegraphics[width=2.8in]{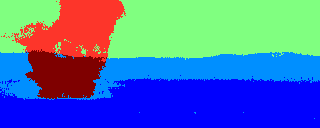} \\
		\small NLTV~\cite{wei_zhu} & GenLouvain
	\end{tabular}
	\caption{
		Segmentations of the plume hyperspectral video using different methods.
		Observe that our method is the only method that gets the whole plume into a single class without any erroneous additions.
	}
	\label{plume_images}
\end{figure}

{\bf Other Hyperspectral Examples} We included seven additional hyperspectral image examples, which are well-known in the image processing community. In each case, we formed the k-nearest neighbor graph using nonlocal means and VLFeat. See Appendix~\ref{sec:data} for more details. Overall, our algorithm performs very competitively on these examples in terms of modularity. The speed is slower than Louvain, but the run time is still very reasonable, and the modularity scores are more consistently good.

\section{Conclusion}
\label{sec:conclusion}

We have shown that modularity optimization can be framed as a balanced TV problem that is convex except for a discrete constraint. This formulation yields an energy landscape that is easier to understand by using terms with a ready intuitive meaning and by putting all of the nonconvexity into a simple discrete constraint. We have given a rigorous nonconvexity result and shown how to use the Ginzburg-Landau functional to approximate modularity optimization by more convex problems. We have also proposed an improved modularity optimization scheme, Balanced TV, which works very well even on large graphs and which requires much less hand-tuning. Numerical tests show that our method is competitive in terms of accuracy, while being faster than its predecessor, Modularity MBO.


\section*{Acknowledgements}
A. L. Bertozzi and Z. M. Boyd were supported by NSF grants DMS-1417674 and DMS-1118971.
X. C. Tai and Z. M. Boyd were supported by ISP-Matematikk (Project no.\ 2390033/F20) at the University of Bergen.
X. C. Tai was additionally supported by the startup grant at Hong Kong Baptist University.
Z. M.  Boyd was additionally supported by the U.S. Department of Defense (DoD) through the National Defense Science \& Engineering Graduate Fellowship (NDSEG) Program.

\begin{appendices}
	\section{Gamma convergence}
	\label{sec:gamma}
	The following are some basic facts about Gamma-convergence to aid in understanding the results of this paper. See~\cite{van_gennip_2012} for more details.
	\begin{definition}
		Let $X$ be a topological space, and $\mathcal{F}_n$ a sequence of real-valued functionals of $X$. Then the sequence is said to $\Gamma$-converge to a functional $\mathcal{F}$ on $X$ if the following two conditions hold:
		\begin{enumerate}
			\item For convergent sequence $x_n\to x$, we have $\liminf_{n\to\infty} \mathcal{F}_n(x_n) \leq \mathcal{F}(x)$.
			\item For every $x$, there exists a convergent sequence $x_n\to x$ such that $\limsup_{n\to\infty} \mathcal{F}_n(x_n) \ge F(x)$.
		\end{enumerate}
	\end{definition}
	For our purposes, $\Gamma$-convergence is primarily a tool for ensuring that the minimizers of $\mathcal{F}_n$ approach the minimizers of $\mathcal{F}$, as guaranteed by the following:
	\begin{theorem}
		Let $\mathcal{F}_n$ $\Gamma$-converge to $\mathcal{F}$, and let $x_n$ be a minimizer of $\mathcal{F}_n$. Then every cluster point of the $x_n$ is a minimizer of $\mathcal{F}$. If $\mathcal{G}$ is continuous, then $\mathcal{F}_n + \mathcal{G}$ $\Gamma$-converges to $\mathcal{F} + \mathcal{G}$.
	\end{theorem}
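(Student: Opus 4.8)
The plan is to treat the two assertions of the theorem separately, since each follows from soft manipulations of the two defining properties of $\Gamma$-convergence together with elementary $\liminf$/$\limsup$ arithmetic; in particular no compactness or coercivity hypothesis is needed, because the first assertion only speaks about cluster points that are already assumed to exist.

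\emph{First assertion (cluster points are minimizers).} Let $\bar x$ be a cluster point of $(x_n)$ and extract a subsequence $x_{n_j}\to\bar x$. Fix an arbitrary competitor $y\in X$; the goal is $\mathcal F(\bar x)\le\mathcal F(y)$. First I would invoke the recovery-sequence (upper-bound) property to produce $y_n\to y$ with $\limsup_{n\to\infty}\mathcal F_n(y_n)\le\mathcal F(y)$, and then restrict attention to the subsequence $(y_{n_j})$, which still converges to $y$ and satisfies $\limsup_j\mathcal F_{n_j}(y_{n_j})\le\limsup_n\mathcal F_n(y_n)\le\mathcal F(y)$. Since $x_{n_j}$ minimizes $\mathcal F_{n_j}$, we have $\mathcal F_{n_j}(x_{n_j})\le\mathcal F_{n_j}(y_{n_j})$ for every $j$. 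Taking $\liminf_j$ of both sides, applying the liminf (lower-bound) property $\mathcal F(\bar x)\le\liminf_j\mathcal F_{n_j}(x_{n_j})$ on the left and $\liminf_j\mathcal F_{n_j}(y_{n_j})\le\limsup_j\mathcal F_{n_j}(y_{n_j})\le\mathcal F(y)$ on the right, yields $\mathcal F(\bar x)\le\mathcal F(y)$. As $y$ was arbitrary, $\bar x$ is a global minimizer of $\mathcal F$.

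\emph{Second assertion (continuous perturbations).} Here I would simply verify the two defining conditions for the pair $\bigl(\mathcal F_n+\mathcal G,\ \mathcal F+\mathcal G\bigr)$. For the liminf condition, given $x_n\to x$, continuity of $\mathcal G$ gives $\mathcal G(x_n)\to\mathcal G(x)$, and since adding a convergent real sequence commutes with $\liminf$, $\liminf_n\bigl(\mathcal F_n(x_n)+\mathcal G(x_n)\bigr)=\liminf_n\mathcal F_n(x_n)+\mathcal G(x)\ge\mathcal F(x)+\mathcal G(x)$. For the recovery-sequence condition, I would reuse a recovery sequence $x_n\to x$ for $(\mathcal F_n,\mathcal F)$; then, again by continuity of $\mathcal G$ and the same arithmetic, $\limsup_n\bigl(\mathcal F_n(x_n)+\mathcal G(x_n)\bigr)=\limsup_n\mathcal F_n(x_n)+\mathcal G(x)\le\mathcal F(x)+\mathcal G(x)$, so the same sequence witnesses the recovery property for the perturbed functionals.

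I expect no serious obstacle: the whole argument is bookkeeping. The only points requiring a moment of care are the passage to a common subsequence $(n_j)$ in the first part — one must line up the recovery sequence for $y$ with the cluster-point subsequence for $\bar x$ — and the fact that adding a \emph{convergent} real sequence leaves $\liminf$ and $\limsup$ unchanged apart from the limit. It is also worth double-checking the direction of the inequalities in the definition of $\Gamma$-convergence as used here, since the standard convention bounds the limit functional \emph{above} by the $\liminf$ and bounds the recovery sequence's $\limsup$ \emph{above} by $\mathcal F(x)$; the proof above uses those standard senses.
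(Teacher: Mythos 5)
The paper never proves this statement: it is quoted in the appendix as a standard fact from the $\Gamma$-convergence literature (with a pointer to the references), so there is no internal proof to compare against. Your argument is the classical textbook proof and it is correct: the first part combines the recovery sequence for an arbitrary competitor $y$ with the minimality of $x_{n_j}$ and the lower-bound inequality at the cluster point, and the second part checks both defining conditions directly, using that a continuous perturbation $\mathcal{G}$ is sequentially continuous and that adding a convergent real sequence shifts $\liminf$ and $\limsup$ by its limit. Two small remarks. First, you were right to be suspicious of the inequality directions: as printed, the paper's definition has both inequalities reversed relative to the standard convention (almost certainly a typo), and the theorem is only true under the standard convention you adopted, so your choice is the correct reading. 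Second, the lower-bound inequality is stated for full sequences $x_n\to x$, whereas the cluster point only supplies a subsequence $x_{n_j}\to\bar x$; to apply it you should either invoke the standard fact that $\Gamma$-convergence is inherited by subsequences, or extend $(x_{n_j})$ to a full sequence converging to $\bar x$ (e.g.\ filling the remaining indices with $\bar x$) and use that the $\liminf$ along a subsequence dominates the $\liminf$ along the full sequence. This is a one-line patch, not a genuine gap, and with it your proof is complete.
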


	We end with the proof of~\cref{thm:GL}.
	\begin{proof}
		We largely follow~\cite{van_gennip_2012}, generalizing and filling in a minor hole from that proof.

		Observe that all of the terms not involving the potential are continuous and independent of $\epsilon$, so they cannot interfere with the $\Gamma$-convergence~\cite{dal_maso}.
		Therefore, it suffices to prove that $\frac{1}{\epsilon}T$ $\Gamma$-converges to
		\[
			\chi(u) =
			\begin{cases}
				0 & \text{if $u$ corresponds to a partition} \\
				+\infty & \text{otherwise}.
			\end{cases}
		\]

		To prove the lower bound, let $u_n \to u$ and $\epsilon_n\to 0$. If $u$ corresponds to a partition, then $\chi(u)=0$, which is automatically less than or equal to $\frac{1}{\epsilon_n}T(u_n)$ for each $n$. If $u$ does not correspond to a partition, then $\chi(u) = +\infty.$ Pick $N_1$ such that whenever $n>N_1$, the distance from $u_n$ to the nearest feasible point is at least $c>0$. Letting $T_c$ be the infimum of $T$ on all of $\R^{N\times\nhat}$ minus the balls of radius $c$ surrounding each feasible point (so $T_0>0$ in particular). Then we have $\lim\inf_{n\to\infty} \frac{1}{\epsilon_n}T(u_n) \ge \lim_{n\to\infty} \frac{1}{\epsilon_n}T_0 = +\infty$. Thus, the lower bound always holds.

		To prove the upper bound, let $u$ be any $N\times\nhat$ matrix. If $u$ corresponds to a partition, then letting $u_n=u$ for all $n$ gives the required sequence. If $u$ does not correspond to a partition, then $u_n=u$ for all $n$ still satisfies the upper bound requirement.

		Thus both the upper and lower bound requirements hold, and we have proved $\Gamma$-convergence.
	\end{proof}

	\section{Deferred proofs}

	In this section, we give proofs of propositions stated earlier in the paper.

	\begin{proof}[Proof of~\Cref{prop:freezing}]
		We first get pointwise estimates on $u-u_0$:
		\begin{align}
			|| u-u_0||_\infty
			\le ||e^{-\tau M} - I ||_\infty ||u_0||_\infty
			=   ||e^{-\tau M} - I ||_\infty
			\le \sum_{k=1}^\infty \frac{1}{k!}\tau^k ||M||_\infty^k
			= e^{\tau ||M||_\infty} - 1 \label{eqn:estimate}
		\end{align}
		We estimate $||M||_{\infty}$ as follows:
		\begin{align*}
			||M||_{\infty} &= \max_i \sum_j |L_{ij} + \frac{\gamma}{m}k_ik_j|
			= \max_i \sum_j |k_{i}\delta_{ij} - w_{ij} + \frac{\gamma}{m}k_ik_j|
			\\&\le \max_i k_{i} + k_{i} + \frac{\gamma}{m}k_i 2m
			= 2(1+\gamma) k_{\mathrm{max}}
		\end{align*}

		These computations do not depend on $\nhat$, but in order to get a timestep, we assume that $\nhat = 2.$ In this case, let $u^1$ and $u^2$ be the columns of $u$. We have
		$u^1_t = -Mu^1$ and $u^2_t = -Mu^2.$
		Subtracting these, and letting $v = u^1 - u^2$ yields $v_t = -Mv.$
		Allowing $v$ to evolve until the time of thresholding, we see that node $i$ will switch classes if and only if $v(i)$ has changed sign, that is if $|v-v_0|_i > 1.$ The quantity in~\cref{eqn:estimate} is less than $1$ exactly when $\tau < \frac{ \log 2 }{ 2(\gamma + 1) k_{\mathrm{max}} } \approx \frac{0.15}{(\gamma+1) k_{\mathrm{max}}}$. This is exactly the bound we sought.

		Next, we work on the $L^2$ bound
		\begin{align}
			|| u-u_0||_\infty &\le \sqrt{\nhat}|| u-u_0||_2 \le \sqrt{\nhat}||e^{-\tau M} - I ||_2 ||u_0||_2 \le\sqrt{\nhat} ||u_0||_2 \sum_{k=1}^\infty \frac{1}{k!}\tau^k ||M||_2^k
			\\&= \sqrt{\nhat}||u_0||_2 \left( e^{\tau ||M||_2} - 1 \right)
			= \sqrt{\nhat}||u_0||_2 \left( e^{\tau \rho} - 1 \right) \label{eqn:l2_estimate}
		\end{align}
		As before, when we let $\nhat=2$, one can subtract the columns to get $v$, so that no node will switch communities as long as $||v - v_0||_{\infty} < 1$, which is guaranteed if
		$\tau < \rho^{-1} \log\left( 1+N^{-\frac{1}{2}} \right).$
	\end{proof}
%
	\begin{proof}[Proof of~\cref{prop:steady_state}]
%
		To get the bound, we let $\Lambda$ be a diagonal matrix with the eigenvalues of $M$ on the diagonal. Since $M$ is positive semi-definite, we can write $M = Q\Lambda Q^T$ for some orthogonal matrix $Q$. Then we have
		\[
			||u(\tau)||_2 = ||e^{-\tau M} u_0 ||_2 \le ||e^{-\tau M}||_2 || u_0 ||_2 = ||e^{-\tau \Lambda}||_2 || u_0 ||_2 = e^{-\tau \lambda_1} || u_0 ||_2
		\]
		Setting the latter quantity less than $\epsilon$ and then solving for $\tau$ yields the required bound.
	\end{proof}
	\section{Hyperspectral Image Details}
	\label{sec:data}

	In this appendix we collect some basic facts about the images used in~\cref{hsi_table}.
	\begin{itemize}
		\item Jasper Ridge: An image of a river area. It has 198 channels and 100x100 pixels. Retrieved from \url{http://www.escience.cn/people/feiyunZHU/Dataset_GT.html}.
		\item Samson: An image of a coastline. It has 156 channels and 952x952 pixels. Retrieved from \url{http://www.escience.cn/people/feiyunZHU/Dataset_GT.html}.
		\item Cuprite: An image of ground near Las Vegas. It has 224 channels and 250x190 pixels. Retrieved from \url{http://www.escience.cn/people/feiyunZHU/Dataset_GT.html}.
		\item FLC: A moderate-dimensional image. It has 12 channels and 949x220 pixels. Available at \url{ftp://www.daba.lv/pub/TIS/atteelu_analiize/MultiSpec/tutorial/ModDimensionDataSet.zip}.
		\item Pavia U: An image of Pavia University in Northern Italy. It has 103 channels and 610x610 pixels. Retrieved from \url{http://lesun.weebly.com/hyperspectral-data-set.html}.
		\item Salinas: An image containing vineyard fields, soils, and vegetation. It has 224 channels and 512x217 pixels. Retrieved from \url{http://www.ehu.eus/ccwintco/index.php?title=Hyperspectral_Remote_Sensing_Scenes}.
		\item Salinas 1: A subimage of the previous image containing 86x83 pixels.
	\end{itemize}

\end{appendices}

\nocite{*}
\bibliographystyle{siamplain}
\bibliography{ConvexModularity}

\end{document}